\newtheorem{theorem}{Theorem}
\newtheorem{lemma}{Lemma}
\newtheorem{assumption}{Assumption}
\newtheorem{proposition}{Proposition}
\theoremstyle{definition}
\newtheorem{remark}{Remark}
\newenvironment{proof}{}{}
\newenvironment{proof*}{}{}
\newcommand\laplacian{\ensuremath{\mathcal{L}}\xspace}
\newcommand\incidence{\ensuremath{\mathcal{B}}\xspace}
\newcommand\cardin{\ensuremath{n}\xspace}
\newcommand\cardinedge{\ensuremath{m}\xspace}
\newcommand\neighbor{\ensuremath{\mathcal{N}}\xspace}
\newcommand\average{\ensuremath{\Pi}\xspace}
\newcommand\rotor{\ensuremath{\theta}\xspace}
\newcommand\redge{\ensuremath{\eta}\xspace}
\newcommand\aedge{\ensuremath{\delta}\xspace}
\newcommand\rotorset{\ensuremath{\Theta}\xspace}
\newcommand\freq{\ensuremath{\omega}\xspace}
\newcommand\ctrl{\ensuremath{\xi}\xspace}
\newcommand\inp{\ensuremath{u}\xspace}
\newcommand\demand{\ensuremath{P}\xspace}
\newcommand\vedge{\ensuremath{\Gamma}\xspace}
\newcommand\voltage{\ensuremath{V}\xspace}
\newcommand\suscept{\ensuremath{B}\xspace}
\newcommand\damp{\ensuremath{D}\xspace}
\newcommand\inert{\ensuremath{M}\xspace}
\newcommand\balance{\ensuremath{Q}\xspace}
\newcommand\pot{\ensuremath{U}\xspace}
\newcommand\laplcomm{\ensuremath{\laplacian_\ctrl}\xspace}
\newcommand\lyap{\ensuremath{W}\xspace}
\newcommand\statevec{\ensuremath{x}\xspace}
\newcommand\lyapvec{\ensuremath{\chi}\xspace}
\newcommand\lyapmat{\ensuremath{K}\xspace}
\newcommand\diff[1]{\ensuremath{#1 - \bar{#1}}\xspace}
\newcommand\potdiff{\ensuremath{\nabla\pot(\aedge) - \nabla\pot(\bar\aedge)}\xspace}
\newcommand\opt[1]{\ensuremath{{#1}^{\rm opt}}\xspace}
\newcommand\cost{\ensuremath{C}\xspace}
\newcommand\ep[1]{\ensuremath{\epsilon_{#1}}\xspace}
\newcommand\al[1]{\ensuremath{\alpha_{#1}}\xspace}
\newcommand\be[1]{\ensuremath{\beta_{#1}}\xspace}
\newcommand\filter{\ensuremath{\mathcal{U}}\xspace}
\newcommand\filterall{\ensuremath{\mathcal{V}}\xspace}
\newcommand\ctrld{\ensuremath{z}\xspace}
\newcommand\ctrla{\ensuremath{\zeta}\xspace}
\newcommand\ctrlm{\ensuremath{\mu}\xspace}
\newcommand\bound{\ensuremath{c}\xspace}
\newcommand\lowerbd{\ensuremath{\underline{\bound}}\xspace}
\newcommand\upperbd{\ensuremath{\overline{\bound}}\xspace}
\newcommand\derivbd{\ensuremath{{\bound^\prime}}\xspace}
\newcommand\hesslev{\ensuremath{\underline\nu}\xspace}
\newcommand\levelset{\ensuremath{\Omega}\xspace}
\newcommand\cardindos{\ensuremath{k}\xspace}
\newcommand\dositv{\ensuremath{H}\xspace}
\newcommand\dosallitv{\ensuremath{\Xi}\xspace}
\newcommand\doslen{\ensuremath{|\dosallitv(t)|}\xspace}
\newcommand\dosstart{\ensuremath{h}\xspace}
\newcommand\dosduration{\ensuremath{\dosboundrate}\xspace}
\newcommand\dosboundstart{\ensuremath{\kappa}\xspace}
\newcommand\dosboundrate{\ensuremath{\tau}\xspace}
\newcommand\R{\ensuremath{\mathbb{R}}\xspace}
\newcommand\Z{\ensuremath{\mathbb{Z}}\xspace}
\newcommand\0{\ensuremath{\mathbb{0}}\xspace}
\newcommand\1{\ensuremath{\mathbb{1}}\xspace}
\newcommand\ones{\ensuremath{\1\1\T}\xspace}
\newcommand\lmax[1]{\ensuremath{\lambda_{\rm max}(#1)}\xspace}
\newcommand\lmin[1]{\ensuremath{\lambda_{\rm min}(#1)}\xspace}
\newcommand\T{^\top}
\newcommand\I{^{-1}}
\newcommand\gen{_\gens}
\newcommand\load{_\loads}
\newcommand\gens{\ensuremath{\mathbf{G}}\xspace}
\newcommand\loads{\ensuremath{\mathbf{L}}\xspace}
\newcommand\dd[2]{\frac{\partial #1}{\partial #2}}
\newcommand\lead{\multicolumn{2}{c}{}}
\DeclareMathOperator{\diag}{\mathrm{diag}}
\DeclareMathOperator{\range}{\mathrm{Im}}
\DeclareMathOperator{\bdiag}{\mathrm{block\,diag}}
\DeclareMathOperator{\col}{\mathrm{col}}
\DeclareMathOperator{\symm}{sp}
\renewcommand{\exp}[1]{\ensuremath{\mathrm{e}^{#1}}}
\renewenvironment{proof}{%
% Print the proof's title
\par\noindent{\itshape Proof. }%
% Set a counter, which means "has the QED symbol been printed yet? 0 = no"
\setcounter{custprftag}{0}%
% The normal end of proof prints a qed if none were printed yet
\newcommand\custprfendprf{\ifnum\value{custprftag}=0{\hfill$\square$\par}\else{\vspace{-1em}}\fi}%
% The qedhere tag, to be used in display math, prints a square at the right margin
\newcommand\QEDhere{\tag*{$\square$}\stepcounter{custprftag}}%
% At the end of the environment, print the normal end of proof
}{\custprfendprf}
\renewenvironment{proof*}[1]{%
% Print the proof's title
\par\noindent{\itshape #1. }%
% Set a counter, which means "has the QED symbol been printed yet? 0 = no"
\setcounter{custprftag}{0}%
% The normal end of proof prints a qed if none were printed yet
\newcommand\custprfendprf{\ifnum\value{custprftag}=0{\hfill$\square$\par}\else{\vspace{-1em}}\fi}%
% The qedhere tag, to be used in display math, prints a square at the right margin
\newcommand\QEDhere{\tag*{$\square$}\stepcounter{custprftag}}%
% At the end of the environment, print the normal end of proof
}{\custprfendprf}
\newcounter{custprftag}
\newcommand{\futureconsideration}[1]{} % remarks for later, that should not show in the paper
\begin{document}

\begin{frontmatter}
\title{Exponential convergence under\\distributed averaging integral frequency control}
\thanks{The material in this paper was partially presented at the 2017 IFAC World Congress, Toulouse, France.}
\date  {}
\author[g]{Erik Weitenberg}
\author[g]{Claudio De Persis}
\author[c]{Nima Monshizadeh}

\address[g]{Engineering and Technology Institute Groningen and\\ Jan Willems Center for Systems and Control\\ University of Groningen, 9747 AG Groningen, The Netherlands\\ (e-mail: \{e.r.a.weitenberg,c.de.persis\}@rug.nl)}
\address[c]{Department of Engineering, University of Cambridge\\ Trumpington Street, Cambridge, CB2 1PZ, United Kingdom\\ (e-mail: n.monshizadeh@eng.cam.ac.uk)}
\begin{abstract}
We investigate the performance and robustness of  distributed averaging integral controllers used in the optimal frequency regulation of power networks.
We construct a strict Lyapunov function that allows us to quantify the exponential convergence rate of the closed-loop system. 
As an application, we study the stability of the system in the presence of disruptions to the controllers' communication network, and investigate  how the convergence rate is affected by these disruptions.
\end{abstract}

\begin{keyword}
Lyapunov methods, %2.3
Networked systems, %2.3
Power networks, %no Power systems 2.3 and 6.3
Robustness analysis, %ok 2.5
Cyber-physical systems %ok 4.1
%Smart grids %ok 6.3
\end{keyword}

\end{frontmatter}

%!TEX root=2017-exponential-convergence+aut

% Not meant for direct use: this is only the content
% Compile one of the '+journal' versions to see formatted output

%%%%%%%%%%%%%%%%%%%%%%%%%%%%%%%%%%%%%%%%%%%%%%%%%%%%%%%%%%%%%%%%%%%%%%%%%%%%%%%%
\section{Introduction} \label{sec:intro} %%%%%%%%%%%%%%%%%%%%%%%%%%%%%%%%%%%%%%%

Modern power grids can be regarded as a large network of control areas, each producing and consuming power and transferring it to adjacent areas.
The frequency of the AC signal is tightly regulated around its nominal value of e.g.\ \SI{50}{Hz} to guarantee reliable operation of this network.
Traditionally, this is achieved by means of proportional (`droop') control and PI control.
In this setup, each area compensates for its local fluctuations in load, and adjusts its production to provide previously scheduled power flows to the adjacent areas.
As a result, estimates of the load in each area are required in advance to achieve economical efficiency.

Recently, renewable energy sources such as wind turbines have been introduced in significant numbers.
Since these sources do not usually provide a predictable amount of power, the net load on the individual control areas will change more rapidly and by larger amounts.
More substantial fluctuations are expected to occur in microgrids, which are energy systems that can operate independently of the main grid.
The resulting need for more advanced control strategies for future power networks has led to the design of distributed controllers equipped with a real-time communication network \citep{dorfler14breaking,shafiee14distributed,mojica14dynamic,buerger13dynamic,trip16internal}.

The addition of a communication network raises a reliability and security problem, as communication packets can be lost and digital communi\-cation networks may fall victim to failures and malicious attacks.
A common disruption is the so-called Denial of Service, or DoS \citep{byres04myths}, which can be understood as a partial or total interruption of communications.
It is therefore of interest to characterize the performance degradation of the aforementioned networks of distributed controllers under loss of information, possibly due to a DoS event.

\subsection{Literature review} %%%%%%%%%%%%%%%%%%%%%%%%%%%%%%%%%%%%%%%%%%%%%%%%%

The current research on frequency regulation in power networks is reviewed in \cite{ibraheem05recent}. Since this field of research receives considerable amounts of attention, we will summarize a subset of results that are close to our interest.

Frequency stability and control in power networks is a well-established field of research which has lead to important results for a variety of models \citep[see e.g.][]{bergen81structure, tsolas85structure}.
More recently, distributed control methods have been proposed to guarantee not only frequency regulation but also economic optimality.
In a microgrid context, distributed averaging integral control is well-studied \citep{simpson-porco13synchronization, buerger13dynamic,dorfler14breaking,trip16internal,andreasson17coherence}.
In the context of power networks, distributed internal-model-based optimal controllers have also been studied \citep{buerger13dynamic, trip16internal}.
As a complementary approach to distributed integral or internal-model controllers, primal-dual gradient controllers \citep{li14connecting,zhang13real,stegink16unifying} are able to handle general convex objective functions as well as constraints, but in turn require much information about the power network parameters.

% Robustness
The robustness of power networks under various controllers has been investigated in the works above to varying degree.
In this light, it is useful to consider strictly decreasing energy functions \citep{malisoff09constructions}.
\citet{zhao15distributed} make a first attempt to arrive at one, and their effort is expanded upon by \citet{schiffer17robustness} in the context of time-delayed communication.
Bearing this in mind, we propose a construction of a new strict Lyapunov function for the purpose of explicitly quantifying the 
\emph{exponential} convergence of power networks under distributed averaging integral control and then study the performance of this control in the presence of communication disruptions.

% DoS : kinds
As an application of robustness measures, we will investigate the effect of Denial of Service.
It, and related phenomena, have been studied as well.
See e.g.\ \cite{byres04myths} for an introduction to the subject.
It can be modeled as a stochastic process \citep{befekadu15risk}, a resource-constrained process \citep{gupta10optimal}, or using only constraints on the proportion of time it is active \citep{persis15input,persis14resilient-ifac}.
Correspondingly, the investigations of systems under DoS events vary, with focus being on planning transmissions outside the disruption intervals \citep{shishehforoush13triggering},
  limiting the maximum ratio of time during which DoS is active \citep{persis15input},
  or guaranteeing stability regardless and quantifying convergence behavior \citep{persis15input,persis14resilient-ifac}.
The latter approach offers interesting perspectives, since the specific characterization of the period of time during which communication is not permitted adopted in \cite{persis14resilient-ifac} allows for great flexibility and can conveniently model both genuine loss of communication or packet drops due to malicious behaviour.
Furthermore, the analysis of \cite{persis15input,persis14resilient-ifac} is based on Lyapunov functions, can handle
% nonlinearities \citep{depersis15networked} as well as % only put this in arxiv version
distributed systems \citep{senejohnny15selftriggered,senejohnny16jamming}, and therefore is well suited for the class of nonlinear networked models describing power networks.

\subsection{Main contribution} %%%%%%%%%%%%%%%%%%%%%%%%%%%%%%%%%%%%%%%%%%%%%%%%

The contribution of this paper is primarily theoretical: existing approaches to the  problem of optimal frequency control have mostly relied on non-strictly decreasing energy -- or Lyapunov functions, using LaSalle's invariance principle and related results to guarantee convergence to an invariant mani\-fold on which the Lyapunov function's derivative vanishes (see \citealp{schiffer17robustness,turitsyn17framework} for exceptions).
Since this does not lead to strong results on  convergence, we design a strictly decreasing Lyapunov function that does
prove exponential convergence to the optimal synchronous solution.

As an illustration, the final  part of the paper makes use of the developed Lyapunov function to show exponential convergence to the optimal solution in spite of possible communication interruptions, and directly relate the speed of convergence to the physical parameters of the system and the availability of the communication network.
As a result, the resilience of the aforementioned economically optimal control strategies to DoS events is quantified explicitly.

%%%%%%%%%%%%%%%%%%%%%%%%%%%%%%%%%%%%%%%%%%%%%%%%%%%%%%%%%%%%%%%%%%%%%%%%%%%%%%%%

The remainder of this paper is organized as follows.
In Section~\ref{sec:setting}, we outline our model for the power network, goals for its control, and existing control strategies we will use.
Then, in Section~\ref{sec:lyapunov}, we derive a strictly decreasing Lyapunov function and show exponential convergence to the optimal solution.
In Section~\ref{sec:dos}, we introduce a model for communication disruptions, and use our Lyapunov function to 
%show existence of an exponential bound on convergence of the system to the optimal solution.
study the robustness of distributed controllers to these disruptions. 
In Section~\ref{sec:simulations}, we illustrate the main result using numerical simulations of an academic model of a power network.
Finally, Section~\ref{sec:conclusions} presents conclusions.

\subsection{Notation} \label{ssec:notation} %%%%%%%%%%%%%%%%%%%%%%%%%%%%%%%%%%%%
Given a system state $x = x(t)$, we use the notation $\dot x$ to mean the time derivative $\dd xt$. Likewise, a function $f: \R^n \to \R$ of such a state, such as a Lyapunov function, has time derivative $\dot f := (\nabla_x f(x))\T \dot x$. We denote its Hessian by $\nabla^2 f$.
When used with vector arguments, $\sin$ and $\cos$ are defined element-wise.
The symbols \0 and \1 denote vectors and matrices filled with 0 and 1 respectively; if there is ambiguity about their size, the dimensions are given as a subscript.
Finally, $\symm(A) := \frac12 (A + A\T)$ is used to denote the symmetric part of a square matrix $A$.

%%%%%%%%%%%%%%%%%%%%%%%%%%%%%%%%%%%%%%%%%%%%%%%%%%%%%%%%%%%%%%%%%%%%%%%%%%%%%%%%
\section{Setting} \label{sec:setting} %%%%%%%%%%%%%%%%%%%%%%%%%%%%%%%%%%%%%%%%%%

We consider a power grid, represented here by a set of \cardin buses.
The network of power lines between the buses is represented by a connected graph with \cardin nodes and \cardinedge arbitrarily oriented edges and with $\pm 1$-valued incidence matrix \incidence. The orientation is necessary for analytical purposes but otherwise meaningless; the physical network is undirected.

We will use a structure-preserving model for the power network.
We consider two types of nodes.
Some nodes in the network are connected to synchronous generators or inverters with filtered power measurements; these we call generators.
The others, which we will refer to as loads, are frequency-responsive loads or inverters with instantaneous power measurements and primary droop control.
In this work, we disregard the additional possibility of `passive' nodes that do not contribute to frequency control at all.
Accordingly, we define the sets \gens and \loads of generator and load nodes with cardinality $\cardin\gen$ and $\cardin\load$ respectively, such that $\cardin\gen + \cardin\load = \cardin$.

The dynamics at each bus is considered in a reference frame that rotates with a certain nominal frequency, i.e.\ \SI{50}{Hz}. 
The dynamics can be expressed in the following form, also known as the swing equations \citep{kundur94power}. At generator node $i \in \gens$, 
\begin{subequations}\label{e:swingpernode}
\begin{align}
\dot\rotor_i &= \freq_i  \label{e:swingpernode1}\\
\inert_i \dot\freq_i
             &= -\damp_i \freq_i - \sum_{\mathclap{j\in\neighbor_i}} \gamma_{ij} \sin(\rotor_i - \rotor_j)
                + \inp_i - \demand_i, \label{e:swingpernode2}\\
\intertext{whereas at load node $i \in \loads$,}
0            &= -\damp_i \freq_i - \sum_{\mathclap{j\in\neighbor_i}} \gamma_{ij} \sin(\rotor_i - \rotor_j)
                + \inp_i - \demand_i, \label{e:swingpernode3}
\end{align}
\end{subequations}
Here, $\gamma_{ij} = \suscept_{ij} \voltage_i \voltage_j$ for each edge connecting buses $i$ and $j$.
We summarize the symbols used in Table \ref{t:symbols}. In this paper, we assume that the voltages at the buses are constant and the lines are lossless.
% \nmargin{they will for sure ask about these assumptions, but we can add a remark in the revision round.}

\begin{remark}[Microgrid model]\label{rem:model}
The model \eqref{e:swingpernode} represents a fairly general model of power networks and microgrids.
The system \eqref{e:swingpernode} is known as the structure-preserving model of power network  \citep{bergen81structure,chiang95direct}, where the load and generator buses are differentiated, and the net active power drawn by a load is an affine function of the frequency at that bus.  
Moreover, the dynamics at the nodes \eqref{e:swingpernode1}-\eqref{e:swingpernode2} can also be associated with droop-controlled inverters with power measurement filters in microgrids \citep{schiffer14conditions}.
Finally, we emphasize that the presence of controllable demand $\inp_i$ at the load buses is optional, and the Lyapunov analysis can be carried out for the same network without controllable demands.
\end{remark}

Inspired by the {\em center-of-inertia coordinates} in classic power system multi-machine stability studies \citep{sauer98power}, we define the average of the phase angles of the inverters as the reference, i.e., $\aedge = \average \rotor$, with $\average := I - \frac1\cardin \ones.$
Note that for any incidence matrix, $\incidence\T \average = \incidence$, since $\1\in\ker(\incidence\T)$.
For ease of computation, we will write the dynamics \eqref{e:swingpernode} in the vector form as follows:
\begin{align} \label{e:swing}
\begin{split}
\dot\aedge       &= \average \freq \\
\inert \dot\freq &= -\damp \freq - \incidence \vedge \sin(\incidence\T \aedge) + \inp - \demand.
\end{split}
\end{align}
Whenever a variable or parameter is used without subscript, it refers to the concatenated version; e.g. $\freq := \col(\freq\gen, \freq\load)$, $\vedge = \diag(\gamma_1, \ldots,\gamma_m)$, $\damp := \bdiag(\damp\gen, \allowbreak \damp\load)$ and $\inert = \bdiag(\inert\gen, \0_{\cardin\load \times \cardin\load})$.

\begin{table} \centering
\captionsetup{width=\linewidth} % ifac breaks caption width somehow
\caption{Symbols and parameters used in the system model} \label{t:symbols}
\footnotesize
\begin{tabular}{l@{ $\in$ }ll}
\lead       & \bf State variables \\
\midrule
\rotor      & $\R^\cardin$ & Voltage phase angles at the edges \\
\freq       & $\R^\cardin$ & Frequency deviations at the nodes \\
\ctrl       & $\R^\cardin$ & Controller states at the nodes \\
[1ex] \lead & \bf Input \\
\midrule
\inp        & $\R^\cardin$ & Controllable generation ($+$) or demand ($-$) \\
\demand     & $\R^\cardin$ & Constant demand ($+$) or generation ($-$) \\
[1ex] \lead & \bf Network \\
\midrule
\incidence  & $\Z^{\cardin \times \cardinedge}$ & Incidence matrix \\
\laplacian  & $\Z^{\cardin \times \cardin}$     & Laplacian matrix \\
\multicolumn{2}{l}{$\neighbor_i$}               & Set of nodes neighboring node $i$ \\
[1ex] \lead & \bf Physical parameters \\
\midrule
\inert      & $\R_+^{\cardin \times \cardin}$
            & Moments of inertia as diagonal matrix \\
\damp       & $\R_+^{\cardin \times \cardin}$
            & Damping constants as diagonal matrix \\
\voltage    & $\R^\cardin$
            & Vector of voltages at the buses \\
\suscept    & $\R^{\cardinedge \times \cardinedge}$
            & Matrix of susceptances of the power lines \\
\balance    & $\R_+^{\cardin \times \cardin}$
            & Diagonal matrix of generation costs \\
\end{tabular}
\end{table}

\subsection{Control goal} %%%%%%%%%%%%%%%%%%%%%%%%%%%%%%%%%%%%%%%%%%%%%%%%%%%%%%
A primary goal in control of power networks is to regulate the frequency deviation to zero.
Let $u=\bar u$ with $\bar u$ being a constant vector. Then, for an equilibrium $(\bar\delta, \bar\omega)$ of \eqref{e:swing} with $\bar\omega=0$, we have
\begin{equation} \label{e:swingsteadystate}
\0 = - \incidence \vedge \sin(\incidence\T \bar\aedge) + \bar\inp - \demand.
\end{equation}
Under the assumption, which we will formalize later, that a solution to \eqref{e:swingsteadystate} exists, there are an infinite number of choices for the input $\bar\inp$ to satisfy \eqref{e:swingsteadystate} given a constant demand \demand.
This freedom can be exploited to design an input $\bar\inp$ which is optimal according to some suitable objective function.

As a matter of fact, in modern power systems, generators do not always have the same capacity.
For this reason, a controller structure that allows the more powerful, cheaper generators to do most of the work are more attractive.
The controllers used in the following sections make use of the concept of distributed optimal  power dispatch which has been investigated in e.g.\ \cite{dorfler14breaking, trip16internal} and references therein.
In this framework, we consider the cost to be dependent only on the amount of power produced, as transmission and other costs are relatively small.
Each generator input $\bar\inp_i, i = 1, \ldots, \cardin$, is assigned a convex cost function $\cost_i(\bar\inp_i)$.
We can then define an overall convex cost function $\cost(\bar\inp) = \sum_{i=1}^n \cost_i(\bar\inp_i)$ and cast the following static optimization problem:
\begin{align}
\begin{split} \label{e:costmin}
\min_{\bar\inp} &\; \cost(\bar\inp) \\
\text{subject to} &\; \1\T(\bar\inp - \demand) = 0.
\end{split}
\end{align}
An optimal steady state solution to \eqref{e:swing} is therefore defined as the one that minimizes the costs of power generation while balancing power supply and demand.
Remarkably, it can be shown that, under a suitable assumption, the optimization problem \eqref{e:costmin} is \emph{equivalent} to the problem
\begin{align}
\begin{split} \label{e:costmin:eq}
\min_{\bar\inp, \bar\aedge} &\; \cost(\bar\inp) \\
\text{subject to} &\; \0 = - \incidence \vedge \sin(\incidence\T \bar\aedge) + \bar\inp - \demand,
\end{split}
\end{align}
which highlights the relevance of \eqref{e:costmin} to the  cost minimzation problem subject to the steady state constraint \eqref{e:swingsteadystate} \citep[Lemma~4]{trip16internal}.

The problem of economic dispatch was addressed by the distributed controllers introduced concurrently and independently in number of papers, which we cover next.
The main objective of this work is to explicitly characterize the performance of these controllers, that is, the speed at which the system converges to its optimal solution.
Then, their robustness against communication disruptions, to be defined precisely in Subsection~\ref{ss:dosintro}, is made explicit as well.

%\todoiny{Are these performance studies related to the work of Turitsyn?}
%\todoing{I just know one or two papers of his discussing transient stability (domain of attraction)}
% Leaving this for future consideration
\subsection{Economically optimal controller} %%%%%%%%%%%%%%%%%%%%%%%%%%%%%%%%%%%
In this subsection, we briefly recall the control strategy detailed in e.g.\ \cite{dorfler14breaking,dorfler13synchronization,monshizadeh16agreeing,trip16internal,persis15bregman,simpson-porco13synchronization}.
In the following material, we will assume cost function \cost to be quadratic, i.e., $\cost_i(\bar \inp_i) = \frac12 q_i \bar\inp_i^2, \; q_i > 0$.
Restricting it to this form allows to avoid load and/or power flow measurements.
Writing $\cost(\inp) = \frac12 \inp\T \balance \inp$, with $Q=\diag(q_i)$,
we introduce the Lagrangian function $L(\inp, \lambda) = \cost(\inp) + \lambda \1\T(\inp - \demand)$, where $\lambda \in \R$ denotes the Lagrange multiplier.
Noting that $L$ is strictly convex in \inp and concave in $\lambda$, there is a saddle point solution $(\bar\inp, \bar\lambda)$ to $\max_\lambda \min_\inp L(\inp, \lambda)$ satisfying
\begin{align*}
\nabla \cost(\bar\inp) + \1 \bar\lambda &= \0 \\
\1\T(\bar\inp - \demand) &= 0,
\end{align*}
which is obtained as \citep[Lemma 3]{trip16internal}
\begin{equation} \label{e:uoptimal}
\opt{\bar\inp} = \balance\I \frac{\ones \demand}{\1\T \balance\I \1}.
\end{equation}
Note that at the optimal point \eqref{e:uoptimal}, the power generated at each node $i$ is proportional to the inverse of its marginal cost $\balance_i$.

Now, returning to equality \eqref{e:swingsteadystate} and setting $u=\opt{\bar\inp}$ yields
\begin{equation} \label{e:feas}
\0 = - \incidence \vedge \sin(\incidence\T \bar\aedge) + \balance\I \frac{\ones \demand}{\1\T \balance\I \1} - \demand,
\end{equation}
which together with $\bar\omega=0$ identify an equilibrium of \eqref{e:swing} with zero frequency deviation and optimal power dispatch.
Due to the presence of the sinusoids, the first term in the right-hand-side of the equality above is bounded, and thus an arbitrary mismatch between the optimal generation $\opt{\bar\inp}$ and demand $P$ cannot be tolerated.  Therefore, we impose the following feasibility assumption to guarantee the existence of an equilibrium with optimal properties:
\begin{assumption}[Feasability] \label{asm:existence}
There exists a vector $\bar\delta\in \range \Pi$ such that \eqref{e:feas} is satisfied, and $\incidence\T \bar\aedge$ is in the \emph{interior} of $\rotorset := [\rho - \frac\pi2, \allowbreak \frac\pi2 - \rho]^\cardinedge$, for some $\rho > 0$.
\end{assumption}

\begin{remark}[Security constraint]
The extra condition on $\bar\aedge$ is standard in power grid stability investigations and is usually called the security constraint \citep{dorfler14breaking}.
We modify it slightly by making explicit the distance of $(\incidence\T \bar\aedge)_i$ from $\pm \pi/2$.
This will be necessary later to show boundedness of the trajectories of \eqref{e:swing}, and to derive explicit expressions for its rate of decay.
We require that the equilibrium is in the interior of this set, so a bounded open set around it will always exist in which to prove exponential convergence of trajectories.
\end{remark}

The following lemma shows that under Assumption \ref{asm:existence}, the solution $\bar\delta$ to \eqref{e:feas} is unique.
\begin{lemma}[Uniqueness of the equilibrium]
Let Assumption \ref{asm:existence} hold for some vectors $\bar\delta$ and $\bar\delta'$. 
Then, $\bar\delta=\bar\delta'$.
\end{lemma}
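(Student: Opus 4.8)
The plan is to subtract the two instances of \eqref{e:feas}, convert the resulting identity into a sum of nonnegative scalar terms using strict monotonicity of the sine, and then combine connectivity of the graph with the restriction $\bar\aedge,\bar\aedge'\in\range\average$.

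First I would evaluate \eqref{e:feas} at both $\bar\aedge$ and $\bar\aedge'$ and subtract; the optimal-dispatch term $\balance\I\ones\demand/(\1\T\balance\I\1)$ and the demand $\demand$ cancel, leaving
\begin{equation*}
\incidence\vedge\bigl(\sin(\incidence\T\bar\aedge)-\sin(\incidence\T\bar\aedge')\bigr)=\0 .
\end{equation*}
Left-multiplying by $(\bar\aedge-\bar\aedge')\T$ and using the trivial identity $(\bar\aedge-\bar\aedge')\T\incidence=(\incidence\T\bar\aedge-\incidence\T\bar\aedge')\T$ gives, with the shorthand $\psi:=\incidence\T\bar\aedge$ and $\psi':=\incidence\T\bar\aedge'$,
\begin{equation*}
\sum_{\ell=1}^{\cardinedge}\gamma_\ell\,(\psi_\ell-\psi_\ell')\bigl(\sin\psi_\ell-\sin\psi_\ell'\bigr)=0 .
\end{equation*}

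Next I would note that $\vedge=\diag(\gamma_1,\dots,\gamma_\cardinedge)$ has strictly positive entries, while Assumption~\ref{asm:existence} forces every $\psi_\ell$ and $\psi_\ell'$ to lie in $[\rho-\tfrac\pi2,\tfrac\pi2-\rho]\subset(-\tfrac\pi2,\tfrac\pi2)$, where $\sin$ is strictly increasing. Hence each term in the sum is nonnegative and vanishes only if $\psi_\ell=\psi_\ell'$; since the sum is zero we conclude $\psi=\psi'$, i.e.\ $\incidence\T(\bar\aedge-\bar\aedge')=\0$. As the graph is connected, $\ker\incidence\T$ is spanned by $\1$, so $\bar\aedge-\bar\aedge'=c\1$ for some $c\in\R$. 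Finally, $\average=I-\tfrac1\cardin\ones$ is the orthogonal projector onto $\1^{\perp}$, so $\bar\aedge,\bar\aedge'\in\range\average$ yields $\1\T(\bar\aedge-\bar\aedge')=0$, i.e.\ $c\cardin=0$; therefore $c=0$ and $\bar\aedge=\bar\aedge'$.

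I do not anticipate a genuine obstacle; the argument is standard. The two points deserving attention are that the security margin $\rho>0$ is precisely what keeps the angle differences strictly inside $(-\tfrac\pi2,\tfrac\pi2)$, so that \emph{strict} monotonicity of $\sin$ — and hence strict positivity of each nonzero summand — is available, and that the residual component along $\1$ left after establishing $\incidence\T(\bar\aedge-\bar\aedge')=\0$ is removed by the range constraint $\bar\aedge,\bar\aedge'\in\range\average$ rather than by anything dynamical.
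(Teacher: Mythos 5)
Your argument is correct and is essentially the paper's own proof: subtract the two equilibrium equations, pair with $\bar\aedge-\bar\aedge'$, use strict monotonicity of $\sin$ on $\rotorset$ to force $\incidence\T\bar\aedge=\incidence\T\bar\aedge'$, and then kill the residual $\1$-component via $\range(\average)\perp\range(\1)$. The only (immaterial) difference is that the paper first writes $\bar\aedge=\average\bar\rotor$ and multiplies by $(\bar\rotor-\bar\rotor')\T$ before invoking $\incidence\T\average=\incidence\T$, whereas you multiply by $(\bar\aedge-\bar\aedge')\T$ directly.
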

\begin{proof}
Note that, because $\aedge \in \range(\average)$, there exists a vector $\bar\rotor$ such that $\bar\aedge = \average \bar\rotor$, and likewise, there exists a vector $\bar\rotor'$ with $\bar\aedge' = \average \bar\rotor'$.
Then, applying \eqref{e:swingsteadystate} to both $\bar\aedge$ and $\bar\aedge'$ yields
\[ \incidence \vedge \sin(\incidence\T \bar\aedge) 
 - \incidence \vedge \sin(\incidence\T \bar\aedge') = \0. \]
Multiplying on the left by $\bar\rotor - \bar\rotor'$, we get
\[ (\bar\rotor-\bar\rotor')\T \incidence \vedge (\sin(\incidence\T \bar\aedge) - \sin(\incidence\T \bar\aedge')) = 0, \]
or equivalently,
\[ (\bar\aedge-\bar\aedge')\T \incidence \vedge (\sin(\incidence\T \bar\aedge) - \sin(\incidence\T \bar\aedge')) = 0. \]
Note that \vedge is a diagonal matrix.
Hence, we can expand this statement, writing $\incidence\T_i$ for the $i^{\rm th}$ row of $\incidence\T$, as
\[ \sum_{i=1}^\cardin (\incidence\T_i (\bar\aedge - \bar\aedge')) \gamma_i (\sin(\incidence\T_i \bar\aedge) - \sin(\incidence\T_i \bar\aedge')) = 0. \]
Since $\sin$ is strictly monotonous on \rotorset and since $\incidence\T \aedge \in \rotorset$, and since $\gamma_i > 0$ for all $i$, each of the terms of this sum is nonnegative.
As the sum vanishes, each term must be 0; we must conclude that $\incidence\T \bar\aedge = \incidence\T \bar\aedge'$, hence $\bar\aedge - \bar\aedge' \in \ker(\incidence\T) = \range(\1)$.
By definition, $\bar\aedge - \bar\aedge' \in \range(\average) \perp \range(\1)$.
Therefore, $\bar\aedge = \bar\aedge'$.
\end{proof}

\futureconsideration{if space is an issue, we can omit the proof above.}

We now introduce the distributed control algorithm \citep{simpson-porco13synchronization,trip16internal,monshizadeh16agreeing,dorfler13synchronization,zhao15distributed}.
At each node, a controller actuates the local energy production $\inp_i$.
Economic optimality is achieved by fitting the controllers with an undirected, connected, delay-free communication network, represented by a graph with Laplacian matrix \laplcomm.
The dynamics of the controllers at the nodes are then given by
\begin{align} \label{e:controller}
\dot \ctrl_i &= - \sum_{\mathclap{j\in\neighbor_{\mathrm{comm}, i}}}
                  (\balance_i \ctrl_i - \balance_j \ctrl_j)
                - \balance_i\I \freq_i
   &  \inp_i &= \ctrl_i, & i \in \gens \cup \loads \nonumber \\
\intertext{defining $\neighbor_{\mathrm{comm}, i}$ as the set of neighbors of node $i$ in the communication network. In the vector form the expression becomes}
\dot \ctrl &= - \laplcomm \balance \ctrl - \balance\I \freq &  \inp &= \ctrl.
\end{align}

\begin{proposition}
Under Assumption \ref{asm:existence}, the solutions to the system \eqref{e:swing} in closed loop with the controllers at the nodes \eqref{e:controller} locally%
\footnote{The term locally refers to the fact that solutions are initialized in a suitable neighborhood of $(\bar\aedge, \0, \bar\ctrl)$.}%
converge to the point $(\aedge, \freq, \ctrl) = (\bar\aedge, \0, \bar\ctrl := \opt{\bar\inp})$.%
\footnote{The proof follows immediately from \citet[Thm. 4]{monshizadeh15output}}
\end{proposition}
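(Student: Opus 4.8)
The plan is to construct an energy-type Lyapunov function for the closed loop and then invoke LaSalle's invariance principle, after first reducing away the algebraic equations at the load buses. To fix the target equilibrium, assemble \eqref{e:swing} and \eqref{e:controller} into $\dot\aedge = \average\freq$, $\inert\dot\freq = -\damp\freq - \incidence\vedge\sin(\incidence\T\aedge) + \ctrl - \demand$, $\dot\ctrl = -\laplcomm\balance\ctrl - \balance\I\freq$, and set $\freq = \0$ and $\dot\ctrl = \0$: connectedness of the communication graph forces $\balance\ctrl \in \ker(\laplcomm) = \range(\1)$, and left-multiplying $\0 = -\incidence\vedge\sin(\incidence\T\aedge) + \ctrl - \demand$ by $\1\T$, together with $\1\T\incidence = \0$, fixes the scalar and gives $\ctrl = \balance\I\ones\demand/(\1\T\balance\I\1) = \opt{\bar\inp} = \bar\ctrl$; what then remains is precisely \eqref{e:feas}, so by Assumption~\ref{asm:existence} and the preceding uniqueness lemma the only equilibrium with $\aedge \in \range(\average)$ and $\incidence\T\aedge \in \rotorset$ is $(\bar\aedge, \0, \bar\ctrl)$.

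Because $\inert$ vanishes on \loads, the closed loop is a differential-algebraic system; since $\damp\load \succ 0$, the load equations $\0 = -\damp\load\freq\load - (\incidence\vedge\sin(\incidence\T\aedge))\load + \ctrl\load - \demand\load$ can be solved for $\freq\load$ as a smooth function of $(\aedge,\ctrl)$, so near $(\bar\aedge,\0,\bar\ctrl)$ the dynamics is a genuine ODE in the reduced state $(\aedge,\freq\gen,\ctrl)$, with $\aedge$ confined to the invariant subspace $\range(\average)$. On this state I would take
\[ \lyap = \tfrac12\freq\T\inert\freq + \pot(\aedge) - \pot(\bar\aedge) - (\nabla\pot(\bar\aedge))\T(\aedge - \bar\aedge) + \tfrac12(\ctrl - \bar\ctrl)\T\balance(\ctrl - \bar\ctrl), \]
where $\pot(\aedge) := -\1\T\vedge\cos(\incidence\T\aedge)$, so that $\nabla\pot(\aedge) = \incidence\vedge\sin(\incidence\T\aedge)$ and $\nabla^2\pot(\aedge) = \incidence\vedge\diag(\cos(\incidence\T\aedge))\incidence\T$. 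Since $\nabla^2\pot \succeq 0$ on \rotorset with kernel $\range(\1)$, the Bregman term $\pot(\aedge) - \pot(\bar\aedge) - (\nabla\pot(\bar\aedge))\T(\aedge - \bar\aedge)$ is positive definite in $\aedge$ along $\range(\average)$, so \lyap is positive definite in the reduced coordinates about the equilibrium; and because $\bar\aedge$ lies in the \emph{interior} of the security region, there is a compact sublevel set $\levelset = \{\lyap \le \bound\}$ around $(\bar\aedge,\0,\bar\ctrl)$ on which $\incidence\T\aedge$ remains in \rotorset.

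Differentiating \lyap along the closed loop — using $\incidence\T\average = \incidence\T$, so that $(\nabla\pot(\aedge))\T\average\freq = (\nabla\pot(\aedge))\T\freq$, the algebraic constraints to absorb the load part of $\freq\T\inert\dot\freq$, the identity $\demand + \nabla\pot(\bar\aedge) = \bar\ctrl$ from \eqref{e:feas}, and $\laplcomm\balance\bar\ctrl = \0$ (as $\balance\bar\ctrl \in \range(\1)$) — the cross terms cancel and one obtains
\[ \dot\lyap = -\freq\T\damp\freq - (\balance(\ctrl - \bar\ctrl))\T\laplcomm(\balance(\ctrl - \bar\ctrl)) \le 0. \]
Since \levelset is compact and forward invariant, LaSalle applies: on the largest invariant subset of $\{\dot\lyap = 0\} \cap \levelset$, $\damp \succ 0$ forces $\freq \equiv \0$, hence $\dot\aedge \equiv \0$ and $\0 = -\nabla\pot(\aedge) + \ctrl - \demand$; left-multiplying by $\1\T$ gives $\1\T\ctrl = \1\T\demand = \1\T\bar\ctrl$, while $\laplcomm\balance(\ctrl - \bar\ctrl) = \0$ gives $\balance(\ctrl - \bar\ctrl) \in \range(\1)$, and these two together force $\ctrl = \bar\ctrl$, whence $\nabla\pot(\aedge) = \nabla\pot(\bar\aedge)$ and the uniqueness lemma yields $\aedge = \bar\aedge$. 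Thus $(\bar\aedge,\0,\bar\ctrl)$ is the sole point of the invariant set, so every trajectory started in \levelset converges to it, which is the asserted local convergence.

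The main obstacle is not the energy computation but the differential-algebraic bookkeeping at the load buses: one must verify that the reduction to an ODE is legitimate on the relevant neighbourhood, that \lyap is positive definite in the \emph{reduced} coordinates — its kinetic part alone being only semidefinite, blind to $\freq\load$ — and that the chosen sublevel set genuinely respects the security constraint $\incidence\T\aedge \in \rotorset$; this is exactly what the output-regulation argument of \citet[Thm.~4]{monshizadeh15output} provides.
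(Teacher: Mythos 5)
Your proof is correct, and it is essentially the argument the paper is pointing to: the paper itself gives no proof beyond the citation to \citet[Thm.~4]{monshizadeh15output}, and your LaSalle argument with the energy/Bregman storage function is exactly that standard route --- indeed your $\lyap$ is the paper's \eqref{e:lyap1}, i.e.\ \eqref{e:lyap} with $\ep{1}=\ep{2}=0$, which the authors explicitly identify as a weak common Lyapunov function in the proof of Theorem~\ref{thm:expstabdos}, and your equilibrium computation reproduces \eqref{e:uoptimal}--\eqref{e:feas}. The only point worth adding is that Section~\ref{sec:lyapunov} later supersedes this proposition: by augmenting $\lyap$ with the cross-terms \eqref{e:lyap2} the paper makes the derivative strictly negative and obtains exponential convergence directly, dispensing with the invariance-principle step that your argument (like the cited reference) relies on.
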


The implication of this proposition is that the distributed controllers \eqref{e:controller} are able to regulate the frequency to its nominal value and achieve economically optimal generation of power without measuring the total fixed demand and generation, \demand.

%%%%%%%%%%%%%%%%%%%%%%%%%%%%%%%%%%%%%%%%%%%%%%%%%%%%%%%%%%%%%%%%%%%%%%%%%%%%%%%%
\section{Strictly decreasing Lyapunov function} \label{sec:lyapunov} %%%%%%%%%%%

To arrive at an exponential bound on the speed of convergence, we first construct a strictly decreasing Lyapunov function.
We then derive an exponentially decreasing upper bound for the Lyapunov function value, and discuss its implications.

\subsection{Strict Lyapunov function} %%%%%%%%%%%%%%%%%%%%%%%%%%%%%%%%%%%%%%%%%%

The analysis below makes heavy use of an incremental model of the original system \eqref{e:swing}, \eqref{e:controller}, with respect to the equilibrium $(\bar\aedge, \0, \bar\ctrl)$, $\bar\ctrl = \opt{\bar\inp}$.
This gives rise to the following dynamics:
\begin{align}
\begin{split}\label{e:incrementalswing}
\dot\aedge   &= \average \freq \\
\inert\gen \dot\freq\gen
             &= -\damp\gen \freq\gen - (\potdiff)\gen + \diff{\ctrl\gen} \\
\0           &= -\damp\load \freq\load - (\potdiff)\load + \diff{\ctrl\load} \\
\dot\ctrl    &= -\laplcomm \balance (\diff\ctrl) - \balance\I \freq
\end{split}
\end{align}
where $\pot(\aedge) = -\1\T \vedge \cos(\incidence\T \aedge)$ is the so-called potential function whose gradient satisfies $\nabla\pot(\aedge) = \incidence \vedge \sin(\incidence\T \aedge)$.
We denote the subvector $\incidence\gen \vedge \sin(\incidence\T \aedge)$ by the shorthand $\nabla\pot(\aedge)\gen$, and likewise for $\nabla\pot(\aedge)\load$.

We introduce the following Lyapunov function candidate, with parameters $\ep{1}, \ep{2} > 0$ to be determined later.
Note that \eqref{e:lyap1} below is an energy-based storage function commonly used in the study of the class of incrementally passive systems \citep{persis15bregman}, while the addition of \eqref{e:lyap2} will ensure that \lyap is strictly decreasing along any solution to \eqref{e:incrementalswing} other than the optimal equilibrium $(\bar\aedge, \0, \bar\ctrl)$:
\begin{subequations} \label{e:lyap}
\begin{align}
\begin{split} \label{e:lyap1}
\lyap(\aedge, \freq, \ctrl)
  &= \pot(\aedge) - \pot(\bar \aedge) - \nabla \pot(\bar \aedge)\T (\diff\aedge) \\
  &+ \frac12 \freq\T \inert \freq + \frac12 (\diff\ctrl)\T \balance (\diff\ctrl)
\end{split} \\
\begin{split} \label{e:lyap2}
  &+ \ep{1} (\potdiff)\T \balance \inert \freq \\
  &- \ep{2} (\diff{\ctrl})\T \ones \inert \freq.
\end{split}
\end{align}
\end{subequations}
The cross-terms allow us to prove exponential convergence to the equilibrium. The need for two separate cross-terms will become clear in Remark \ref{rem:2-cross-terms} on page~\pageref{rem:2-cross-terms}.

Note that \lyap vanishes at the equilibrium $(\bar\aedge, \0, \bar\ctrl)$ of \eqref{e:swing}.
In addition, we have the following Lemma.
\begin{lemma} \label{lem:lyapbound}
Suppose Assumption \ref{asm:existence} holds.
There exist sufficiently small $\ep{1}, \ep{2}$ and positive constants \lowerbd, \upperbd such that for all \aedge with $\incidence\T \aedge\in\rotorset$, we have
\[ \lowerbd \|\statevec\gen(\aedge, \freq\gen, \ctrl)\|^2 \leq \lyap(\aedge, \freq, \ctrl) \leq \upperbd \|\statevec\gen(\aedge, \freq\gen, \ctrl)\|^2, \]
where $\statevec\gen(\aedge, \freq\gen, \ctrl) := \col(\diff\aedge, \freq\gen, \diff\ctrl)$.
\end{lemma}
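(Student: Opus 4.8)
The plan is to decompose $W = W_0 + W_1$, where $W_0$ is the energy part \eqref{e:lyap1} and $W_1$ collects the two cross terms \eqref{e:lyap2}. I will sandwich $W_0$ between two positive multiples of $\|\statevec\gen\|^2$, bound $|W_1|$ by $(\ep1 c_1 + \ep2 c_2)\|\statevec\gen\|^2$ with $c_1,c_2$ not depending on $\ep1,\ep2$, and then pick $\ep1,\ep2$ small enough that $W_1$ is absorbed into the lower bound on $W_0$. Two facts are used throughout: $\inert = \bdiag(\inert\gen,\0)$, so every $\freq$-dependent term involves $\freq\gen$ only (which is why $\statevec\gen$, not the full state, is the right quantity); and, since $\dot\aedge = \average\freq$ keeps $\aedge$ in $\range(\average)$ and $\bar\aedge\in\range(\average)$ by Assumption~\ref{asm:existence}, we have $\diff\aedge\in\range(\average)$, i.e.\ $\diff\aedge\perp\1$.

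\emph{Bounding $W_0$.} The quadratic terms $\tfrac12\freq\T\inert\freq = \tfrac12\freq\gen\T\inert\gen\freq\gen$ and $\tfrac12(\diff\ctrl)\T\balance(\diff\ctrl)$ are at once sandwiched between $\tfrac12\lmin{\inert\gen}\|\freq\gen\|^2$ and $\tfrac12\lmax{\inert\gen}\|\freq\gen\|^2$, respectively between $\tfrac12\lmin{\balance}\|\diff\ctrl\|^2$ and $\tfrac12\lmax{\balance}\|\diff\ctrl\|^2$, since $\inert\gen,\balance\succ0$. For the Bregman term, Taylor's theorem gives $\pot(\aedge)-\pot(\bar\aedge)-\nabla\pot(\bar\aedge)\T(\diff\aedge) = \tfrac12(\diff\aedge)\T\nabla^2\pot(\zeta)(\diff\aedge)$ for some $\zeta$ on the segment $[\bar\aedge,\aedge]$; because $\rotorset$ is a box, hence convex, and $\incidence\T\aedge,\incidence\T\bar\aedge\in\rotorset$, also $\incidence\T\zeta\in\rotorset$, on which each $\cos(\incidence\T\zeta)_i\in[\sin\rho,1]$. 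With $\nabla^2\pot(\zeta)=\incidence\vedge\diag(\cos(\incidence\T\zeta))\incidence\T$ and $\vedge\succ0$ this gives $\sin\rho\,\incidence\vedge\incidence\T\preceq\nabla^2\pot(\zeta)\preceq\incidence\vedge\incidence\T$. The matrix $\incidence\vedge\incidence\T$ is a weighted Laplacian of the connected graph, hence positive definite on $\range(\average)=\ker(\incidence\T)^\perp$; denoting by $\mu>0$ its smallest eigenvalue on this subspace, we obtain $\tfrac12\sin\rho\,\mu\,\|\diff\aedge\|^2\le \pot(\aedge)-\pot(\bar\aedge)-\nabla\pot(\bar\aedge)\T(\diff\aedge)\le\tfrac12\lmax{\incidence\vedge\incidence\T}\|\diff\aedge\|^2$. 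Letting $\alpha$ and $\beta$ be the smallest and largest of these three lower, respectively upper, coefficients yields $\alpha\|\statevec\gen\|^2\le W_0\le\beta\|\statevec\gen\|^2$.

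\emph{Bounding $W_1$.} Since $\potdiff = \big(\int_0^1\nabla^2\pot(\zeta_s)\intd s\big)(\diff\aedge)$ with each $\incidence\T\zeta_s\in\rotorset$, we have $\|\potdiff\|\le\lmax{\incidence\vedge\incidence\T}\|\diff\aedge\|$. As $\balance\inert\freq$ and $\ones\inert\freq=\1(\1\T\inert\gen\freq\gen)$ vanish on the load components, the two cross terms in \eqref{e:lyap2} are bounded in absolute value by $\ep1\lmax{\incidence\vedge\incidence\T}\lmax{\balance}\lmax{\inert\gen}\|\diff\aedge\|\,\|\freq\gen\|$ and $\ep2\,\cardin\,\lmax{\inert\gen}\|\diff\ctrl\|\,\|\freq\gen\|$. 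Applying $ab\le\tfrac12(a^2+b^2)$ to each product gives $|W_1|\le(\ep1 c_1+\ep2 c_2)\|\statevec\gen\|^2$ with $c_1,c_2$ depending only on the network data. Finally, choosing $\ep1,\ep2>0$ so that $\ep1 c_1+\ep2 c_2<\alpha$ and setting $\lowerbd:=\alpha-\ep1 c_1-\ep2 c_2>0$ and $\upperbd:=\beta+\ep1 c_1+\ep2 c_2$ establishes the claim.

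The only non-routine step — and the main obstacle — is the lower bound on the Bregman term: one must keep $\incidence\T\zeta$ inside $\rotorset$ (which uses convexity of the box $\rotorset$ together with the interiority in Assumption~\ref{asm:existence}), obtain the \emph{uniform} lower bound $\cos\ge\sin\rho>0$ on $\rotorset$ (precisely why the security constraint carries the explicit margin $\rho$), and recover positive definiteness of $\incidence\vedge\incidence\T$ by restricting to $\range(\average)$, the subspace in which $\diff\aedge$ lives. Everything else is bookkeeping with the zero block of $\inert$ and with the Cauchy--Schwarz and Young inequalities.
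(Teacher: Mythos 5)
Your proof is correct and follows essentially the same route as the paper: sandwich the energy part \eqref{e:lyap1} (quadratic terms by eigenvalue bounds, the Bregman term by a mean-value/Hessian argument using the margin $\rho$ and the restriction of the weighted Laplacian to $\1^\perp$), bound the cross terms \eqref{e:lyap2} by Young's inequality with coefficients proportional to $\ep{1},\ep{2}$, and absorb them by choosing $\ep{1},\ep{2}$ small. The only difference is that you inline the estimates the paper outsources to Lemma~\ref{lem:activepower}, working directly with $\nabla^2\pot$ in $\aedge$-coordinates rather than in $\incidence\T\aedge$-coordinates.
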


See the Appendix for this Lemma's proof.

For ease of the notation, we will omit the explicit parameters of $\statevec\gen$ in the rest of the paper.

\begin{remark}
Note that $\lyap(\aedge, \freq, \ctrl)$ does not explicitly depend on $\freq\load$, and thus 
$\freq\load$ does not appear in the lower and upper bounds of $\lyap$. 
\end{remark}

\subsection{Derivative of the Lyapunov function} \label{ss:lyapderiv} %%%%%%%%%%

Before proving that the Lyapunov function strictly decreases along the solutions of the system, we need to perform an additional change of coordinates on the state components of the controller.

Let $\filter \in \R^{\cardin \times (\cardin-1)}$ be a matrix of orthonormal columns, orthogonal to $\frac{1}{\sqrt\ctrlm} \balance^{-\frac12} \1$, where $\mu = \1\T \balance\I \1$.
Then the new controller coordinates are defined as follows:
\begin{equation} \label{e:transform}
\begin{aligned}
\begin{bmatrix} \ctrld \\ \ctrla \end{bmatrix} 
  &= \begin{bmatrix} \filter\T \\ \frac{1}{\sqrt\ctrlm} \1\T \balance^{-\frac12} \end{bmatrix} 
     \balance^{\frac12} \ctrl 
  &&=: \filterall\I \ctrl \\
\ctrl &= \balance^{-\frac12}
     \begin{bmatrix} \filter & \frac{1}{\sqrt\ctrlm} \balance^{-\frac12} \1 \end{bmatrix}
     \begin{bmatrix} \ctrld \\ \ctrla \end{bmatrix}
  &&=: \filterall
     \begin{bmatrix} \ctrld \\ \ctrla \end{bmatrix}
\end{aligned}
\end{equation}

In these coordinates, the dynamics \eqref{e:incrementalswing} takes the form
\begin{align} \label{e:transincrementalswing}
\begin{split}
\dot\aedge   &= \average \freq \\
\inert \dot\freq
             &= -\damp \freq - (\potdiff) \\ &\quad+ \frac{1}{\sqrt\ctrlm} \balance\I \1 (\diff\ctrla) + \balance^{-\frac12} \filter (\diff\ctrld) \\
\dot \ctrld       &= -\filter\T \balance^{\frac12} \laplcomm \balance^{\frac12} \filter (\diff\ctrld) - \filter\T \balance^{-\frac12} \freq \\
\dot \ctrla   &= \frac{-1}{\sqrt\ctrlm} \1\T \balance\I \freq,
\end{split}
\end{align}
and the Lyapunov function writes as
\begin{multline} \label{e:translyap}
\lyap(\aedge, \freq, \ctrld, \ctrla)
   = \pot(\aedge) - \pot(\bar \aedge) - \nabla \pot(\bar \aedge)\T (\diff\aedge) \\
\begin{aligned}
  &+ \frac12 \freq\T \inert \freq + \frac12 (\diff \ctrld)\T (\diff \ctrld)
   + \frac12 (\diff\ctrla)^2 \\
  &+ \ep{1} \freq\T \inert \balance (\potdiff) \\
  &- \ep{2} \frac1{\sqrt\ctrlm} \freq\T \inert \1 (\diff\ctrla),
\end{aligned}
\end{multline}
where by an abuse of the notation we are denoting the Lyapunov function in the new coordinates with the same symbol $\lyap$ as before.

To prove that $\lyap(\aedge, \freq, \ctrld, \ctrla)$ is strictly decreasing along solutions of 
\eqref{e:lyapmat},
we must compute its directional derivative along the vector field defined by the right-hand side of \eqref{e:transincrementalswing} and show that it is strictly negative.

For ease of notation, we define
\[
\dot \lyap(\aedge, \freq, \ctrld, \ctrla) = \frac{\partial \lyap}{\partial (\aedge, \freq, \ctrld, \ctrla)}
\begin{bmatrix}
\dot \aedge \\ \dot \freq\\ \dot \ctrld \\ \dot \ctrla
\end{bmatrix}
\]
where the vector of derivatives on the right-hand side are associated with the vector field  \eqref{e:transincrementalswing}.
% Insert lyapmat, so it floats near its reference
\begin{equationfloat*}
\begin{equation} \label{e:lyapmat}
\lyapmat(\aedge) = \symm\begin{bmatrix}
       \ep{1} \balance &
         \ep{1} \balance \damp &
           -\ep{1} \balance^{\frac12} \filter &
             \0 \\
       \0 &
        \damp - \ep{1} \inert \balance \nabla^2\pot(\aedge) 
              - \ep{2} \frac{1}{\ctrlm} \inert \ones \balance\I &
           \0 &
             -\ep{2} \frac1{\sqrt\ctrlm} \damp \1 \\
       \0 & 
         \0 &
           \filter\T \balance^{\frac12} \laplcomm \balance^{\frac12} \filter &
             \ep{2} \frac1{\sqrt\ctrlm} \filter\T \balance^{-\frac12} \1 \\
       \0 &
         \0 &
           \0 &
             \ep{2}
\end{bmatrix}\!.
\end{equation}
\end{equationfloat*}

\begin{lemma} \label{lem:lyapderiv}
The directional derivative of $W$ along the vector field \eqref{e:transincrementalswing} satisfies
\[ \dot \lyap(\aedge, \freq, \ctrld, \ctrla) = - \lyapvec\T \lyapmat(\aedge) \lyapvec, \]
with $\lyapmat(\aedge)$ as in \eqref{e:lyapmat}, and
\[ \lyapvec(\aedge, \freq, \ctrld, \ctrla) := \col(\potdiff, \freq, \diff\ctrld, \diff\ctrla). \]
\end{lemma}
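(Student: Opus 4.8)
The plan is to compute $\dot\lyap$ directly as the sum of the directional derivatives of the five groups of terms in \eqref{e:translyap}, substitute the vector field \eqref{e:transincrementalswing}, collect everything in terms of the vector $\lyapvec = \col(\potdiff, \freq, \diff\ctrld, \diff\ctrla)$, and then identify the resulting quadratic form with $-\lyapvec\T\lyapmat(\aedge)\lyapvec$. First I would handle the "energy" part \eqref{e:lyap1}: the gradient of $\pot(\aedge) - \pot(\bar\aedge) - \nabla\pot(\bar\aedge)\T(\diff\aedge)$ in $\aedge$ is $\nabla\pot(\aedge) - \nabla\pot(\bar\aedge) = \potdiff$, which paired with $\dot\aedge = \average\freq$ gives $(\potdiff)\T\average\freq$; using $\incidence\T\average = \incidence$ together with $\nabla\pot(\aedge) = \incidence\vedge\sin(\incidence\T\aedge)$ one sees $(\potdiff)\T\average\freq = (\potdiff)\T\freq$. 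The term $\frac12\freq\T\inert\freq$ contributes $\freq\T\inert\dot\freq$, and here I substitute the second line of \eqref{e:transincrementalswing}, which produces $-\freq\T\damp\freq - \freq\T(\potdiff) + \frac{1}{\sqrt\ctrlm}\freq\T\balance\I\1(\diff\ctrla) + \freq\T\balance^{-\frac12}\filter(\diff\ctrld)$. Note the $-\freq\T(\potdiff)$ here cancels the $+(\potdiff)\T\freq$ from the potential part. The term $\frac12(\diff\ctrld)\T(\diff\ctrld)$ gives $(\diff\ctrld)\T\dot\ctrld = -(\diff\ctrld)\T\filter\T\balance^{\frac12}\laplcomm\balance^{\frac12}\filter(\diff\ctrld) - (\diff\ctrld)\T\filter\T\balance^{-\frac12}\freq$, whose last summand cancels the $\freq\T\balance^{-\frac12}\filter(\diff\ctrld)$ above; similarly $\frac12(\diff\ctrla)^2$ gives $(\diff\ctrla)\dot\ctrla = -\frac{1}{\sqrt\ctrlm}(\diff\ctrla)\1\T\balance\I\freq$, cancelling the $\frac{1}{\sqrt\ctrlm}\freq\T\balance\I\1(\diff\ctrla)$ term. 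So after the energy part alone one is left with exactly $-\freq\T\damp\freq - (\diff\ctrld)\T\filter\T\balance^{\frac12}\laplcomm\balance^{\frac12}\filter(\diff\ctrld)$, which accounts for the (2,2) and (3,3) diagonal blocks of $\lyapmat$ (after symmetrization, noting $\damp = \damp\T$ and the Laplacian block is symmetric).

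Next I would differentiate the two cross-terms \eqref{e:lyap2}. For $\ep{1}\freq\T\inert\balance(\potdiff)$, the product rule yields $\ep{1}\dot\freq\T\inert\balance(\potdiff) + \ep{1}\freq\T\inert\balance\,\frac{d}{dt}(\potdiff)$. In the second piece, $\frac{d}{dt}(\potdiff) = \nabla^2\pot(\aedge)\dot\aedge = \nabla^2\pot(\aedge)\average\freq$; again using $\incidence\T\average=\incidence$ and the structure of $\nabla^2\pot(\aedge) = \incidence\vedge\diag(\cos(\incidence\T\aedge))\incidence\T$ one gets $\nabla^2\pot(\aedge)\average\freq = \nabla^2\pot(\aedge)\freq$, so this contributes $\ep{1}\freq\T\inert\balance\nabla^2\pot(\aedge)\freq$, i.e.\ the $-\ep{1}\inert\balance\nabla^2\pot(\aedge)$ correction inside the (2,2) block. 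In the first piece I substitute $\inert\dot\freq$ from \eqref{e:transincrementalswing}, giving (after using $\inert\gen\dot\freq\gen$ is the only nonzero part, but $\inert$ already has the load block zero, so $\dot\freq\T\inert = (\inert\dot\freq)\T$ on the relevant components) $\ep{1}(\potdiff)\T\balance\big[-\damp\freq - (\potdiff) + \frac{1}{\sqrt\ctrlm}\balance\I\1(\diff\ctrla) + \balance^{-\frac12}\filter(\diff\ctrld)\big]$; these four summands are precisely, respectively, the (1,2), (1,1), and (1,3) blocks and a term $\ep{1}\frac{1}{\sqrt\ctrlm}(\potdiff)\T\1(\diff\ctrla)$. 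I should check whether this last one matches the pattern of \eqref{e:lyapmat}: the (1,4) block there is $\0$, so I must verify this term either vanishes or is absorbed — in fact $\balance\balance\I\1 = \1$, and $\incidence\T\average\1 = \incidence\T\1 = 0$ so $(\potdiff)$ lies in $\range\incidence$, hence... actually the cleaner route is to note $\potdiff = \incidence\vedge(\sin(\incidence\T\aedge) - \sin(\incidence\T\bar\aedge))$ and $\1\T\incidence = 0$, so $\1\T(\potdiff) = 0$, killing that term; this is one of the small identities I need to flag. For $-\ep{2}\frac{1}{\sqrt\ctrlm}\freq\T\inert\1(\diff\ctrla)$, the product rule gives $-\ep{2}\frac{1}{\sqrt\ctrlm}\dot\freq\T\inert\1(\diff\ctrla) - \ep{2}\frac{1}{\sqrt\ctrlm}\freq\T\inert\1\dot{\overline{\ctrla}}$ (with $\dot\ctrla$ from \eqref{e:transincrementalswing}). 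The second piece is $-\ep{2}\frac{1}{\sqrt\ctrlm}\freq\T\inert\1 \cdot \frac{-1}{\sqrt\ctrlm}\1\T\balance\I\freq = \ep{2}\frac{1}{\ctrlm}\freq\T\inert\1\1\T\balance\I\freq = \ep{2}\frac{1}{\ctrlm}\freq\T\inert\ones\balance\I\freq$, giving the $-\ep{2}\frac{1}{\ctrlm}\inert\ones\balance\I$ correction in the (2,2) block (with a sign, since $\dot\lyap = -\lyapvec\T\lyapmat\lyapvec$). The first piece, substituting $\inert\dot\freq$, produces $-\ep{2}\frac{1}{\sqrt\ctrlm}(\diff\ctrla)\big[-\damp\freq - (\potdiff) + \frac{1}{\sqrt\ctrlm}\balance\I\1(\diff\ctrla) + \balance^{-\frac12}\filter(\diff\ctrld)\big]\T\1$; here $\1\T(\potdiff) = 0$ again, $\1\T\balance\I\1 = \ctrlm$, and $\1\T\balance^{-\frac12}\filter = \sqrt\ctrlm\cdot(\text{component along }\balance^{-\frac12}\1)\T\filter = 0$ by the defining orthogonality of $\filter$ — wait, I need $\1\T\balance^{-\frac12}\filter$: since $\filter$'s columns are orthogonal to $\frac{1}{\sqrt\ctrlm}\balance^{-\frac12}\1$, indeed $(\balance^{-\frac12}\1)\T\filter = 0$, so that term drops. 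What remains from this piece is $\ep{2}\frac{1}{\sqrt\ctrlm}(\diff\ctrla)\damp\freq\T\1$ and $-\ep{2}\frac{1}{\ctrlm}(\diff\ctrla)^2\1\T\balance\I\1 = -\ep{2}(\diff\ctrla)^2$, which give the (2,4) block $-\ep{2}\frac{1}{\sqrt\ctrlm}\damp\1$ and the (4,4) scalar $\ep{2}$.

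At this point every surviving term has been matched to an entry of \eqref{e:lyapmat}, and the final step is to collect them: write the total as $-\lyapvec\T\widetilde\lyapmat(\aedge)\lyapvec$ for the (generally non-symmetric) $\widetilde\lyapmat$ read off above, and observe that since $\lyapvec\T\widetilde\lyapmat\lyapvec = \lyapvec\T\symm(\widetilde\lyapmat)\lyapvec$, we may replace $\widetilde\lyapmat$ by $\symm(\widetilde\lyapmat)$, which is exactly $\lyapmat(\aedge)$ as displayed. The (3,3) block is already symmetric ($\laplcomm$ is a Laplacian, hence symmetric), the (1,3) and (3,1) entries $\mp\ep{1}\balance^{\frac12}\filter$ pair up, the (3,4) entry $\ep{2}\frac{1}{\sqrt\ctrlm}\filter\T\balance^{-\frac12}\1$ is in fact $\0$ since $(\balance^{-\frac12}\1)\T\filter = 0$ (so writing it is harmless), and the placement of the off-diagonal blocks in the strictly-upper pattern of \eqref{e:lyapmat} before symmetrization is a bookkeeping choice.

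\emph{Main obstacle.} The computation itself is long but mechanical; the real care is needed (i) in correctly using $\incidence\T\average = \incidence$ to replace $\average\freq$ by $\freq$ wherever $\potdiff$ or $\nabla^2\pot$ acts, (ii) in tracking the three-way cancellations among the "energy" terms so that no spurious $\freq\T(\potdiff)$, $\freq\T\balance^{-\frac12}\filter(\diff\ctrld)$, or $\freq\T\balance\I\1(\diff\ctrla)$ survives, and (iii) in verifying the small linear-algebra identities $\1\T(\potdiff) = 0$, $(\balance^{-\frac12}\1)\T\filter = 0$, $\1\T\balance\I\1 = \ctrlm$, and $\balance\balance\I = I$ at exactly the points where they are needed to make the cross-term contributions land in the stated block pattern. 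I expect (ii) to be where an error is most likely to creep in, so I would organize the proof as a single displayed computation of $\dot\lyap$ broken into the five term-groups, cancel within the first three, then append the two cross-term contributions, and finally rewrite the assembled quadratic form via $\symm(\cdot)$.
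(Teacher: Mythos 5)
Your proposal is correct and follows essentially the same route as the paper's proof: differentiate \eqref{e:translyap} term by term along \eqref{e:transincrementalswing}, exploit the cancellations within the energy part, use $\incidence\T\average=\incidence\T$, $\1\T(\potdiff)=0$ and $\filter\T\balance^{-\frac12}\1=\0$ to eliminate the stray terms, and assemble the remainder into $-\lyapvec\T\symm(\widetilde\lyapmat)\lyapvec$. You are in fact more explicit than the paper about the final bookkeeping step (matching each surviving term to a block of \eqref{e:lyapmat} and symmetrizing), which the paper leaves implicit.
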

As with $\lyapvec\gen$, we omit the parameters of \lyapvec in the following.

\begin{proof*}{Proof}
Using \eqref{e:transincrementalswing}, the part of $\lyap(\aedge, \freq, \ctrld, \ctrla)$ which is independent of $\ep{i}$ has the  derivative
\begin{align*}
  &\freq\T (-\damp \freq + \balance^{-\frac12} \filter (\diff \ctrld) + \ctrlm^{-\frac12} \balance\I \1 (\diff\ctrla)) \\
  &\quad+ (\diff \ctrld)\T(- \filter\T \balance^{\frac12} \laplcomm \balance^{\frac12} \filter (\diff \ctrld) - \filter\T \balance^{-\frac12} \freq) \\
  &\quad+ (\diff \ctrla) (- \ctrlm^{-\frac12} \1\T \balance\I \freq) \\
={}& -\freq\T \damp \freq - (\diff \ctrld)\T\filter\T \balance^{\frac12} \laplcomm \balance^{\frac12} \filter (\diff \ctrld).
\end{align*}
Here, we used the fact that $\incidence\T \average = \incidence\T$ to cancel the $(\potdiff)\T\freq$--terms.
Meanwhile, the derivative of the first cross-term (ignoring \ep{1}) is
\begin{align*}
  &\freq\T \inert \balance \nabla^2\pot(\aedge) \freq \\
  &\qquad - (\potdiff)\T \balance \damp \freq \\
  &\qquad - (\potdiff)\T \balance (\potdiff) \\
  &\qquad + (\potdiff)\T \balance^{\frac12} \filter (\diff \ctrld).
\end{align*}
Noting that $\col(\inert\gen, \0) \freq\gen = \inert \col(\freq\gen, \freq\load) = \inert \freq$, the derivative of the second cross-term, ignoring \ep{2}, is
\begin{align*}
  &\ctrlm\I \freq\T \inert \ones \balance\I \freq 
   - \ctrlm^{-\frac12} (\diff\ctrla) \1\T ( -\damp \freq \\
  &\quad + \balance^{-\frac12} \filter (\diff \ctrld) + \ctrlm^{-\frac12} \balance\I \1 (\diff\ctrla)) \QEDhere
\end{align*}
\end{proof*}

Having computed the directional derivative $\dot\lyap(\aedge, \freq, \ctrld, \ctrla)$, we now show that it is strictly negative.
\begin{lemma} \label{lem:lyapmatpos}
Suppose that the communication graph is connected.
Then, there exist sufficiently small values of $\ep{1}$ and $\ep{2}$ such that $\lyapmat(\aedge) > 0$ for all \aedge with $\incidence\T \aedge \in \rotorset$.
\end{lemma}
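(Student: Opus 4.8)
The plan is a blockwise sign–definiteness estimate that exploits the fact that the $\ep{1}$– and $\ep{2}$–dependent terms in \eqref{e:lyapmat} supply exactly the positivity that is missing at $\ep{1}=\ep{2}=0$, while keeping careful track of the order in which the various constants are fixed. First I would record the block structure of $\lyapmat(\aedge)$: its four diagonal blocks are $\ep{1}\balance$, $\symm(\damp - \ep{1}\inert\balance\nabla^2\pot(\aedge) - \ep{2}\ctrlm\I\inert\ones\balance\I)$, $\filter\T\balance^{\frac12}\laplcomm\balance^{\frac12}\filter$ and $\ep{2}$, while the only nonzero off–diagonal blocks are the $(1,2)$, $(1,3)$, $(2,4)$ and $(3,4)$ blocks displayed in \eqref{e:lyapmat} (the $(1,4)$ and $(2,3)$ blocks vanish). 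Consequently, with $\lyapvec = \col(\potdiff,\,\freq,\,\diff\ctrld,\,\diff\ctrla)$ partitioned conformably, $\lyapvec\T\lyapmat(\aedge)\lyapvec$ is a sum of four ``diagonal'' quadratic terms and four cross terms.

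Next I would show that the two diagonal blocks that carry no factor $\ep{i}$ are positive definite, uniformly in \aedge. For the $(3,3)$ block, connectedness of the communication graph gives $\ker\laplcomm = \range(\1)$; if $\balance^{\frac12}\filter x \in \range(\1)$ then $\filter x = \alpha\,\balance^{-\frac12}\1$ for some scalar $\alpha$, and since $\filter$ has full column rank with columns orthogonal to $\balance^{-\frac12}\1$ this forces $x=\0$. Hence $\filter\T\balance^{\frac12}\laplcomm\balance^{\frac12}\filter \geq \nu I$ for some $\nu>0$ depending only on the fixed data. For the $(2,2)$ block, $\nabla^2\pot(\aedge) = \incidence\vedge\diag(\cos(\incidence\T\aedge))\incidence\T$ has norm at most $\|\incidence\vedge\incidence\T\|$ for \emph{every} \aedge, so $\symm(\damp - \ep{1}\inert\balance\nabla^2\pot(\aedge) - \ep{2}\ctrlm\I\inert\ones\balance\I) \geq (\lmin\damp - \ep{1}\bound_1 - \ep{2}\bound_2)I \geq \tfrac12\lmin\damp\,I$ once $\ep{1},\ep{2}$ are small enough, with $\bound_1,\bound_2$ depending only on $\inert,\balance,\incidence,\vedge,\ctrlm$; in particular this threshold does not depend on \aedge.

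Then I would dominate the four cross terms by Young's inequality and absorb the pieces into the diagonal terms. The $(1,2)$ and $(1,3)$ cross terms carry a factor $\ep{1}$: splitting each so that the part charged to $\|\potdiff\|^2$ uses at most one third of $\ep{1}\lmin\balance\|\potdiff\|^2$ fixes the corresponding weights in terms of $\|\balance\damp\|$ and $\|\balance^{\frac12}\filter\|$ alone, and the complementary parts — charged to $\|\freq\|^2$ and $\|\diff\ctrld\|^2$ with a coefficient proportional to $\ep{1}$ — are absorbed into the $\tfrac12\lmin\damp$ and $\nu$ reserves once $\ep{1}$ is small. Symmetrically, the $(2,4)$ and $(3,4)$ cross terms carry a factor $\ep{2}$: choosing their weights so that the part charged to $(\diff\ctrla)^2$ uses at most one third of $\ep{2}(\diff\ctrla)^2$ fixes them in terms of $\|\damp\1\|$ and $\|\filter\T\balance^{-\frac12}\1\|$, and the parts charged to $\|\freq\|^2$ and $\|\diff\ctrld\|^2$, again proportional to $\ep{2}$, are absorbed once $\ep{2}$ is small. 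Collecting the estimates yields $\lyapvec\T\lyapmat(\aedge)\lyapvec \geq \tfrac13\ep{1}\lmin\balance\|\potdiff\|^2 + \tfrac14\lmin\damp\|\freq\|^2 + \tfrac12\nu\|\diff\ctrld\|^2 + \tfrac13\ep{2}(\diff\ctrla)^2 > 0$ for $\lyapvec\neq\0$, hence $\lyapmat(\aedge)>0$ for all \aedge with $\incidence\T\aedge\in\rotorset$.

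The main obstacle is precisely the degeneracy at $\ep{1}=\ep{2}=0$, where $\lyapmat$ is only positive semidefinite with kernel in the first and fourth block coordinates, so a naive perturbation argument around a positive–definite matrix is unavailable. One has to notice that the $(1,1)$ and $(4,4)$ blocks, though of order $O(\ep{i})$, are exactly where positivity is recovered, and that the cross terms touching them are also $O(\ep{i})$, so one is balancing quantities of the same order; the argument therefore hinges on fixing every Young weight first, using only $\ep{i}$–independent data, and only afterwards shrinking $\ep{1},\ep{2}$. A secondary point, needed for the ``for all \aedge'' claim, is that the threshold for ``$\ep{i}$ small enough'' is uniform over $\incidence\T\aedge\in\rotorset$, which holds because the sole \aedge–dependence sits in $\nabla^2\pot(\aedge)$, whose norm is bounded on all of $\R^\cardin$.
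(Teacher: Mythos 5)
Your proposal is correct and follows essentially the same route as the paper: both absorb the off-diagonal blocks of $\lyapmat(\aedge)$ via Young's inequality (the paper packages this as a matrix lemma, Lemma~\ref{lem:removecrossterms}, applied separately to the $\ep{1}$- and $\ep{2}$-dependent parts) to obtain a block-diagonal lower bound, and then shrink $\ep{1},\ep{2}$ using that the $\aedge$-dependence sits only in the uniformly bounded $\nabla^2\pot(\aedge)$ and that connectedness makes $\filter\T\balance^{\frac12}\laplcomm\balance^{\frac12}\filter$ positive definite. Your explicit bookkeeping of the order in which the Young weights are fixed before $\ep{1},\ep{2}$ are chosen is a point the paper leaves implicit, but it is the same argument.
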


\begin{proof}
For notational convenience, we will refer to $\lyapmat(\aedge)$ simply as \lyapmat.
First, we reduce \lyapmat to a block diagonal form $\lyapmat'$ using Lemma~\ref{lem:removecrossterms} in Appendix~\ref{sec:techlemmas}.
Then we discuss the blocks of $\lyapmat'$.

\emph{Reduction to a block diagonal form.}
To reduce \lyapmat to block diagonal form, we apply Lemma~\ref{lem:removecrossterms} two times.
First, we express the matrix $K$ as the sum
\[ \lyapmat = \ep{1} \lyapmat_{\ep{1}} +\ep{2} \lyapmat_{\ep{2}} + 
     \bdiag(\0, \damp, \filter\T \balance^{\frac12} \laplcomm \balance^{\frac12} \filter, 0). \]
Then, we focus on the $\ep{1}$-terms.
\begin{equation*}
\lyapmat_{\ep{1}} = \symm\left[\begin{array}{@{}c:ccc@{}}
       \balance &
         \balance \damp &
           -\balance^{\frac12} \filter &
             \0 \\
\hdashline \\[-1em]
       \0 &
         -\inert \balance \nabla^2\pot(\aedge) &
           \0 &
             \0 \\
       \0 & \0 & \0 & \0 \\
       \0 & \0 & \0 & 0
\end{array}\right]\!.
\end{equation*}
Using the partition indicated with $b\T = \frac12 \begin{bmatrix}\balance^{\frac12}& \balance^{\frac12}& \0\end{bmatrix}$ and $c = \bdiag(\balance^{\frac12} \damp, -\filter, \0)$ yields $\lyapmat_{\ep{1}} \geq \lyapmat_{\ep{1}}'$, with
\begin{multline*}
K_{\ep{1}}' = \bdiag\bigg(\frac12 \balance, \\
    -\symm(\inert \balance \nabla^2\pot(\aedge)) - \damp \balance \damp,
    -I_{\cardin-1},
    0 \bigg).
\end{multline*}

Next, we do the same for the $\ep{2}$-terms.
\begin{equation*}
\lyapmat_{\ep{2}} = \symm\left[\begin{array}{@{}ccc:c@{}}
       \0 & \0 & \0 & \0 \\
       \0 &
        - \frac{1}{\ctrlm} \inert \ones \balance\I &
           \0 &
             -\frac1{\sqrt\ctrlm} \damp \1 \\
       \0 & 
         \0 &
           \0 &
             \frac1{\sqrt\ctrlm} \filter\T \balance^{-\frac12} \1 \\
\hdashline \\[-1em]
       \0 &
         \0 &
           \0 &
             1
\end{array}\right]\!.
\end{equation*}

This time, we choose $b = \sqrt{\frac{3\cardin}{\ctrlm}} \bdiag(\0, -\damp, \balance^{\frac12}\filter)$ and $c = \frac1{2\sqrt{3\cardin}} \1_{3\cardin}$.
This yields $\lyapmat_{\ep{2}} \geq \lyapmat_{\ep{2}}'$, with
\begin{multline*}
\lyapmat_{\ep{2}}' = \bdiag\bigg(\0,
    -\frac{1}{\ctrlm} \symm(\inert \ones \balance\I) - \frac{3\cardin}{\ctrlm} \damp^2, \\
    -\frac{3\cardin}{\ctrlm} \filter\T \balance\I \filter,
    1 - \frac14 \bigg).
\end{multline*}

The terms independent of $\ep{1}$ and $\ep{2}$ are already in block diagonal form, and strictly positive definite.
Hence, we let
\begin{multline}
\lyapmat' = \bdiag\bigg(
    \frac12 \ep{1} \balance, \\
\begin{aligned}
    &\damp
      - \ep{1} \symm( \inert \balance \nabla^2\pot(\aedge) )
      - \ep{1} \damp \balance \damp \\
      &\phantom{\damp}
      - \ep{2} \frac{1}{\ctrlm} \symm( \inert \ones \balance\I )
      - \ep{2} \frac{3\cardin}{\ctrlm} \damp^2, \\
    &\filter\T \balance^{\frac12} \laplcomm \balance^{\frac12} \filter
      - \ep{1} I_{\cardin-1}
      - \ep{2} \frac{3\cardin}{\ctrlm} \filter\T \balance\I \filter, \\
    &\frac34 \ep{2}
\bigg),
\end{aligned}
\end{multline}
and conclude that $\lyapmat \geq \lyapmat'$.

\emph{Positive definiteness.}
We note that $\damp > 0$.
Also, since all columns of \filter are perpendicular to $\balance^{-\frac12}\1$, it is clear that $\filter\T \balance^{\frac12} \laplcomm \balance^{\frac12} \filter > 0$ given that the communication graph is connected.
Finally, by definition, $\balance > 0$.
At this point, $\lyapmat \geq \lyapmat' > 0$ provided that $\ep{1}$ and $\ep{2}$ are chosen sufficiently small.
\end{proof}

\begin{remark}[Purpose of the cross-terms] \label{rem:2-cross-terms} \sloppy
Note that the role of the cross-terms in \lyap is now clear: each serves to make one block of $\lyapmat'$ strictly positive definite, at a slight cost to the blocks belonging to the quadratic terms in \freq and $\diff\ctrld$.
\end{remark}

The coordinate transformation has served its purpose in the proof of Lemma~\ref{lem:lyapmatpos}, and now we go back to the original coordinates for the next part. To this end, let
\[ \statevec(\aedge, \freq, \ctrl) := \col(\diff\aedge, \freq, \diff\ctrl). \]

We formalize the results proved so far in a statement. 

\begin{proposition} \label{prp:lyapbound}
Suppose Assumption~\ref{asm:existence} holds.
There exist sufficiently small $\ep{1}, \ep{2}$ and a positive constant $\bound$ such that for any \aedge such that $\incidence\T \aedge\in\rotorset$, and any $\freq\gen, \ctrl$, the directional derivative of \lyap along the vector field \eqref{e:swing}--\eqref{e:controller} satisfies
\begin{equation*}
\dot\lyap(\aedge, \freq, \ctrl) \leq - \bound \lyap(\aedge, \freq, \ctrl).
\end{equation*}
\end{proposition}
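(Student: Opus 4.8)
The plan is to chain together the three preceding lemmas. Choose $\ep1,\ep2$ small enough that the conclusions of Lemma~\ref{lem:lyapbound} and Lemma~\ref{lem:lyapmatpos} hold simultaneously. The matrix $\lyapmat(\aedge)$ of \eqref{e:lyapmat} depends on \aedge only through $\nabla^2\pot(\aedge)=\incidence\vedge\diag(\cos(\incidence\T\aedge))\incidence\T$, hence only through $\incidence\T\aedge$, and it does so continuously; since \rotorset is compact and $\lyapmat(\aedge)>0$ on it by Lemma~\ref{lem:lyapmatpos}, the map $\incidence\T\aedge\mapsto\lmin{\lyapmat(\aedge)}$ attains a strictly positive minimum $\hesslev>0$ over \rotorset. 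Lemma~\ref{lem:lyapderiv} then yields $\dot\lyap\le-\hesslev\|\lyapvec\|^2$; and since \eqref{e:transincrementalswing} and \eqref{e:translyap} are merely \eqref{e:swing}--\eqref{e:controller} and \lyap rewritten in the bijective coordinates \eqref{e:transform} shifted to the equilibrium, this coincides with the directional derivative in the statement. Using the upper bound $\lyap\le\upperbd\|\statevec\gen\|^2$ of Lemma~\ref{lem:lyapbound}, it then remains only to exhibit a constant $\beta>0$ with $\|\lyapvec\|^2\ge\beta\|\statevec\gen\|^2$: the proposition follows with $\bound=\hesslev\beta/\upperbd$.

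To obtain such a $\beta$ I would compare $\lyapvec=\col(\potdiff,\freq,\diff\ctrld,\diff\ctrla)$ with $\statevec\gen=\col(\diff\aedge,\freq\gen,\diff\ctrl)$ blockwise. The frequency block is immediate, $\|\freq\|^2\ge\|\freq\gen\|^2$. For the controller block, the change of variables $\ctrl\mapsto\col(\ctrld,\ctrla)=\filterall\I\ctrl$ in \eqref{e:transform} has $\filterall=\balance^{-\frac12}O$ with $O=[\,\filter\ \ \tfrac1{\sqrt\ctrlm}\balance^{-\frac12}\1\,]$ orthogonal — its columns are orthonormal, the last of unit norm because $\tfrac1\ctrlm\1\T\balance\I\1=1$ — so $\col(\diff\ctrld,\diff\ctrla)=O\T\balance^{\frac12}(\diff\ctrl)$ and hence $\|\diff\ctrld\|^2+(\diff\ctrla)^2\ge\lmin\balance\,\|\diff\ctrl\|^2$. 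Only the phase block then requires genuine work.

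For the phase block the target is $\|\potdiff\|^2\ge\al1\|\diff\aedge\|^2$ whenever $\incidence\T\aedge\in\rotorset$. Since $\aedge=\average\rotor\in\range\average$ by construction and $\bar\aedge\in\range\average$ by Assumption~\ref{asm:existence}, we have $\diff\aedge\in\range\average=(\ker\incidence\T)^\perp$ — using connectedness of the physical graph, so that $\ker\incidence\T=\range\1$ — and therefore $\|\incidence\T(\diff\aedge)\|^2\ge\lambda_2(\laplacian)\,\|\diff\aedge\|^2$, where $\lambda_2(\laplacian)>0$ is the second-smallest eigenvalue of the Laplacian. A coordinatewise mean value theorem gives $\sin(\incidence\T\aedge)-\sin(\incidence\T\bar\aedge)=\diag(\cos\vartheta)\,\incidence\T(\diff\aedge)$ for some $\vartheta$ whose entries lie between those of $\incidence\T\aedge$ and $\incidence\T\bar\aedge$; convexity of \rotorset forces $\vartheta\in\rotorset$, hence $\cos\vartheta_i\ge\sin\rho>0$ for every $i$. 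With $\nabla\pot(\aedge)=\incidence\vedge\sin(\incidence\T\aedge)$, pairing against $\diff\aedge$ gives
\[
(\diff\aedge)\T\bigl(\potdiff\bigr)
  = \bigl(\incidence\T(\diff\aedge)\bigr)\T\vedge\diag(\cos\vartheta)\,\incidence\T(\diff\aedge)
  \ge (\sin\rho)\Bigl(\min_i\gamma_i\Bigr)\|\incidence\T(\diff\aedge)\|^2 ,
\]
and Cauchy--Schwarz on the left-hand side then delivers $\|\potdiff\|\ge(\sin\rho)(\min_i\gamma_i)\lambda_2(\laplacian)\|\diff\aedge\|$, i.e.\ one may take $\al1=\bigl((\sin\rho)(\min_i\gamma_i)\lambda_2(\laplacian)\bigr)^2$. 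Combining the three blocks yields $\|\lyapvec\|^2\ge\beta\|\statevec\gen\|^2$ with $\beta=\min\{\al1,\,1,\,\lmin\balance\}>0$, which closes the argument.

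The one delicate point is exactly this phase estimate. One cannot simply claim that $\nabla^2\pot(\aedge)$ is uniformly positive definite on \rotorset, because it is only positive \emph{semi}definite — annihilated by $\range\1$ on the left and by the cycle space of \incidence on the right — so a direct operator-norm lower bound on the composite $\incidence\vedge\diag(\cos\vartheta)\incidence\T$ is unavailable. The way around it is to use that $\diff\aedge$ lives in $\range\average$ and to route the bound through the bilinear form $(\diff\aedge)\T(\potdiff)$ via Cauchy--Schwarz; everything else — the blockwise norm equivalences and the compactness argument producing \hesslev — is routine.
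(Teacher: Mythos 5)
Your proposal is correct and follows essentially the same route as the paper's proof: uniform positivity of $\lyapmat(\aedge)$ over the compact set $\rotorset$ giving $\dot\lyap\leq-\derivbd\|\lyapvec\|^2$, a norm comparison $\|\lyapvec\|^2\geq\beta\|\statevec\gen\|^2$, and the upper bound of Lemma~\ref{lem:lyapbound}; your phase-block estimate is simply a self-contained re-derivation of Claim~\ref{enumitem:lem-activepower-bound-potdiff} of Lemma~\ref{lem:activepower}, which the paper cites (its appendix proof writes $\potdiff=L(\tilde\redge)(\diff\aedge)$ and bounds it directly by $\lambda_2$ of that weighted Laplacian on $\1^\perp$, so the restriction to $\range\average$ does make the direct operator bound available, contrary to your closing caveat). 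Your explicit handling of the controller block, which picks up the factor $\lmin\balance$ when passing from $\col(\diff\ctrld,\diff\ctrla)$ back to $\diff\ctrl$, is in fact slightly more careful than the paper's $\min(\al{1},1)$, which absorbs that constant informally when returning to the original coordinates.
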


\begin{proof}
It is clear from the proof of Lemma~\ref{lem:lyapmatpos} that, by the invariance of the directional derivative with respect to the change of coordinates, the Lyapunov function $\lyap$ written in the original controller coordinates $\ctrl$ satisfies the same inequality with the variables $\col(\ctrld, \ctrla)$ replaced by $\ctrl$.

Given Lemma~\ref{lem:lyapmatpos}, $\lyapmat(\aedge) > 0$ for all \aedge such that $\incidence\T \aedge \in\rotorset$.
Since this is a closed set, there exists a positive constant \derivbd such that $\lyapmat(\aedge) > \derivbd I$.
Given this, $\dot\lyap(\aedge, \freq, \ctrl) \leq - \derivbd \|\lyapvec(\aedge, \freq, \ctrl)\|^2$.

The first statement of Lemma~\ref{lem:activepower} provides that $\|\potdiff\|^2 \geq \al{1} \|\diff\aedge\|^2$.
Hence, $\|\lyapvec\|^2 \geq \min(\al{1},1) \|\statevec\|^2$, and $\dot\lyap(\aedge, \freq, \ctrl) \leq - \derivbd \min(\al{1},1) \|\statevec(\aedge, \freq, \ctrl)\|^2$.

Hence we remark that
\begin{align} \dot\lyap(\aedge, \freq, \ctrl)
 &\leq - \derivbd\min(\al{1},1) \|\statevec(\aedge, \freq, \ctrl)\|^2 \nonumber\\
 &\leq - \derivbd\min(\al{1},1) \|\statevec\gen(\aedge, \freq\gen, \ctrl)\|^2 \nonumber\\
 &\leq - \frac{\derivbd\min(\al{1},1)}{\upperbd} \lyap(\aedge, \freq, \ctrl) \nonumber\\
 &  =: - \bound \lyap(\aedge, \freq, \ctrl). \QEDhere
\end{align}
\end{proof}

%%%%%%%%%%%%%%%%%%%%%%%%%%%%%%%%%%%%%%%%%%%%%%%%%%%%%%%%%%%%%%%%%%%%%%%%%%%%%%%
\subsection{Exponential convergence to the equilibrium} \label{sec:expstab} %

Having shown that the directional derivative of $\lyap(\aedge, \freq, \ctrl)$ is strictly negative along the vector field of the closed-loop system, we show exponential convergence to
the equilibrium.

\begin{theorem} \label{thm:expstab}
Suppose Assumption \ref{asm:existence} holds.
There exists a neighborhood of the equilibrium $(\bar\aedge, \0, \bar\ctrl)$ such that all the solutions of the closed-loop system \eqref{e:swing}--\eqref{e:controller} that start from that neighborhood converge exponentially to the equilibrium, i.e.\ there exist positive scalars $\alpha, \beta$ such that for all $t \geq 0$,
\[ \|\statevec(t)\| \leq \alpha \|\statevec(0)\| \exp{-\beta t}, \]
with $\statevec(\aedge, \freq, \ctrl) = \col(\diff\aedge, \freq, \diff\ctrl)$.
\end{theorem}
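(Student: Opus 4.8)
The plan is to combine the two facts already in hand — the quadratic sandwich $\lowerbd\|\statevec\gen\|^2\le\lyap\le\upperbd\|\statevec\gen\|^2$ of Lemma~\ref{lem:lyapbound} and the differential inequality $\dot\lyap\le-\bound\lyap$ of Proposition~\ref{prp:lyapbound}, both valid whenever $\incidence\T\aedge\in\rotorset$ — into the classical Lyapunov exponential-stability estimate. The only genuine work is to produce a forward-invariant neighbourhood on which both hold simultaneously, and to recover the load frequencies $\freq\load$, which do not appear in $\statevec\gen$.

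First I would construct the neighbourhood. Since $\dot\aedge=\average\freq\in\range(\average)$ and $\aedge=\average\rotor$ initially lies in $\range(\average)$, the phase component stays in $\range(\average)$; recall from Lemma~\ref{lem:lyapbound} that, on $\range(\average)$, $\lyap\ge\lowerbd\|\statevec\gen\|^2\ge\lowerbd\|\diff\aedge\|^2$ whenever $\incidence\T\aedge\in\rotorset$. As $\incidence\T\bar\aedge$ is interior to $\rotorset$ by Assumption~\ref{asm:existence}, pick $r_0>0$ with $\{\aedge\in\range(\average):\|\diff\aedge\|\le r_0\}\subseteq\{\incidence\T\aedge\in\rotorset\}$, and fix $c$ with $0<c<\lowerbd r_0^2$. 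Let $\levelset$ be the connected component of $\{(\aedge,\freq\gen,\ctrl):\aedge\in\range(\average),\ \lyap(\aedge,\freq,\ctrl)\le c\}$ containing the equilibrium. On the "wall" $\{\aedge\in\range(\average):\|\diff\aedge\|=r_0\}$ one has $\lyap\ge\lowerbd r_0^2>c$, so this wall is disjoint from $\{\lyap\le c\}$; as $\levelset$ is connected and contains the equilibrium (where $\diff\aedge=\0$), it cannot cross the wall and therefore $\levelset\subseteq\{\|\diff\aedge\|<r_0\}\subseteq\{\incidence\T\aedge\in\rotorset\}$, where both Lemma~\ref{lem:lyapbound} and Proposition~\ref{prp:lyapbound} are in force and (again by Lemma~\ref{lem:lyapbound}) $\levelset$ is bounded. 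Working with this connected component of the sublevel set — rather than the full sublevel set, which is unbounded because the linear term $-\nabla\pot(\bar\aedge)\T(\diff\aedge)$ makes $\lyap$ non-coercive — is the point I expect to need the most care.

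Next I would run the standard argument. Any solution starting in $\levelset$ satisfies $\dot\lyap\le-\bound\lyap\le0$ as long as it remains in $\levelset$; hence $\lyap$ is non-increasing along it, the solution stays in $\{\lyap\le c\}$, and by continuity it stays in the same connected component $\levelset$ — so $\levelset$ is forward invariant. Then $\dot\lyap\le-\bound\lyap$ holds for all $t\ge0$, and the comparison lemma (equivalently, $t\mapsto\lyap\,\exp{\bound t}$ is non-increasing) gives $\lyap(t)\le\lyap(0)\exp{-\bound t}$. Sandwiching with Lemma~\ref{lem:lyapbound} at times $0$ and $t$ yields $\|\statevec\gen(t)\|^2\le(\upperbd/\lowerbd)\,\|\statevec\gen(0)\|^2\exp{-\bound t}$.

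Finally I would pass to the full state $\statevec=\col(\diff\aedge,\freq,\diff\ctrl)$. The algebraic equation for the loads in \eqref{e:incrementalswing} reads $\0=-\damp\load\freq\load-(\potdiff)\load+\diff{\ctrl\load}$, hence $\freq\load=\damp\load\I\bigl(\diff{\ctrl\load}-(\potdiff)\load\bigr)$; since $\potdiff$ is Lipschitz in $\diff\aedge$ on the compact set $\{\incidence\T\aedge\in\rotorset\}$ (the derivative of $\sin$ being bounded there), this gives $\|\freq\load\|\le\kappa\|\statevec\gen\|$ for some constant $\kappa$, whence $\|\statevec\|^2\le(1+\kappa^2)\|\statevec\gen\|^2$, while trivially $\|\statevec\gen\|\le\|\statevec\|$. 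Chaining these bounds with the estimate of the previous paragraph and taking square roots gives $\|\statevec(t)\|\le\alpha\|\statevec(0)\|\exp{-\beta t}$ with $\alpha:=\sqrt{(1+\kappa^2)\upperbd/\lowerbd}$ and $\beta:=\bound/2$, which is the claim; the neighbourhood asserted in the statement is (the interior of) $\levelset$.
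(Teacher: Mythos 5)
Your proposal is correct and follows essentially the same route as the paper: the quadratic sandwich of Lemma~\ref{lem:lyapbound} combined with the differential inequality of Proposition~\ref{prp:lyapbound} on a forward-invariant level set, then recovery of $\freq\load$ from the load algebraic equation (which is precisely Claim~\ref{enumitem:lem-activepower-bound-genonly} of Lemma~\ref{lem:activepower}). Your construction of the invariant neighbourhood---taking the connected component of a sublevel set and arguing it cannot cross the wall $\|\diff\aedge\|=r_0$, to account for the non-coercivity of $\lyap$ in the $\aedge$-direction---is a more careful justification of what the paper simply asserts as the existence of a compact level set $\levelset$ contained in $\{\incidence\T\aedge\in\rotorset\}$.
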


\begin{proof}
The equilibrium $(\bar \aedge, \0, \bar \ctrl)$ is a strict minimum of $\lyap(\aedge, \freq, \ctrl)$ by Lemma~\ref{lem:lyapbound}.
Therefore there exists a compact level set $\levelset$ around $(\bar \aedge, \0, \bar \ctrl)$.
Moreover, without loss of generality, any point on the level set $\levelset$ is such that $\incidence\T \aedge \in \rotorset$.
Hence, by Proposition~\ref{prp:lyapbound}, $\dot \lyap \leq -\bound \lyap \leq 0$ along the solutions of the closed loop system, which shows the invariance of $\levelset$.
Integrating this inequality between 0 and $t$ and applying Lemma~\ref{lem:lyapbound} yields exponential convergence of the state variables $\diff\aedge, \freq\gen, \diff\ctrl$ to the origin, namely 
\begin{align*}
\lyap(\aedge(t), \freq(t), \ctrl(t)) &\leq \lyap(\aedge(0), \freq(0), \ctrl(0)) \exp{-\bound t} \\
\|\statevec\gen(t)\|^2 &\leq \frac{\upperbd}{\lowerbd} \|\statevec\gen(0)\|^2 \exp{-\bound t}.
\end{align*}

Now, by Claim \ref{enumitem:lem-activepower-bound-genonly} of Lemma~\ref{lem:activepower}, we also have $\|\statevec(t)\|^2 \leq \gamma \|\statevec\gen(t)\|^2$ for some positive scalar $\gamma$.
Since the right-hand side is converging exponentially to zero, so is $\statevec(t)$, since  $\damp\load$ is positive definite.  We conclude that the full state $(\diff\aedge, \freq, \diff\ctrl)$ exponentially converges to the origin as claimed, with $\alpha = \sqrt{\gamma \upperbd/\lowerbd}$ and $\beta = \frac12 \bound$.  
\end{proof}

%%%%%%%%%%%%%%%%%%%%%%%%%%%%%%%%%%%%%%%%%%%%%%%%%%%%%%%%%%%%%%%%%%%%%%%%%%%%%%%%
\section{Convergence bounds under DoS} \label{sec:dos} %%%%%%%%%%%%%%%%%%%%%%%%%

In the previous sections, we have quantified the convergence rate of solutions to \eqref{e:swing} in closed loop with the controllers \eqref{e:controller}.
We will now consider the effect of a DoS event, which interrupts the communication between controllers as detailed in Assumption~\ref{asm:doslimits} below.
{We conclude, by characterizing the parameters of DoS for which the closed loop system retains exponential convergence to the optimal synchronous solution \eqref{e:feas}.}

\subsection{Intermittent feedback measurements} \label{ss:dosintro} %%%%%%%%%%%%
In the current setting, we consider the case in which the communication graph is disrupted.
To quantify the impact of this disruption on performance, we consider the worst-case scenario in which all communication links fail simultaneously during the disruption period {\citep{senejohnny15selftriggered}.}
Without communication, the controllers will still ensure that $\freq \to \0$, but can no longer guarantee economic optimality \citep[Remark 6]{trip16internal} and are vulnerable to noise in measurements \citep{andreasson14distributed}.

{In the presence of communication disruptions, the system evolves according to the following two modes:}
\begin{enumerate}
\item the nominal {mode}, in which the system and controllers {obey the dynamics \eqref{e:swing}, \eqref{e:controller}}   as detailed previously;
\item the denial-of-service (DoS) {mode}, {in which the system evolves according to \eqref{e:swing}, \eqref{e:controller}  with } $\laplcomm = \0{_{\cardin \times \cardin}}$ in \eqref{e:controller}.
\end{enumerate}
\begin{remark}
Notice that a third state is possible, in which a subset of the communication links is interrupted.
While our results continue to hold for this case, the conditions derived, namely Theorem~\ref{thm:expstabdos}, turn out to be conservative.
% \cdpcomment{are you sure that we can use the same Lyapunov function and prove exponential stability in the case of partially connected graphs? Even in the light of the flaw in the previous proof of the theorem below.}
% \todo[inline, color=white]{Yes. Any graph has $\laplcomm \geq 0$; this is enough to prove the ``new'' version of the theorem we have now. (In fact, for literally any matrix \laplcomm we can derive d -- it will just be very large.) We require specifics about the eigenspace of \laplcomm only to prove that $\lyapmat > 0$.}
A way to reduce this conservatism is to exploit the notion of persistency of communication inspired by \cite{senejohnny16jamming,arcak07passivity}.
This study will be pursued in a future work.
\end{remark}

The system under consideration can now be formalized as follows \citep{persis14resilient-ifac}.
Let $\dosstart_i \geq 0$ denote the starting time of the $i^{\rm th}$ DoS failure, i.e.\ the time of $i^{\rm th}$ DoS transition from inactive to active.
Furthermore, let $\dosduration_i > 0$ denote the length of the $i^{\rm th}$ DoS failure, such that $\dosstart_i + \dosduration_i < \dosstart_{i+1}$.
We then denote the $i^{\rm th}$ DoS interval by $\dositv_i := [\dosstart_i, \dosstart_i + \dosduration_i)$.
During these intervals, no communication is possible between the controllers.
The choice of these intervals is not allowed to be completely arbitrary; limiting the duration of the failure is necessary for closed-loop stability to be achievable at all.
In this light, the DoS failure is restricted as follows.

Given a sequence {of DoS intervals} $\{\dositv_i, i = 1, \ldots, \cardindos\}$, let 
\[ \dosallitv(t) := \bigcup_{i = 1}^\cardindos \dositv_i \cap [0,t] \]
denote the union of DoS intervals up to time $t$.
\begin{assumption} \label{asm:doslimits} \citep[Assumption 1]{persis14resilient-ifac}
{There exist constants $\kappa\in \mathbb{R}_{>0}$ and $\tau\in \mathbb{R}_{>1}$}
%$\cardindos, \dosstart_i, \dosduration_i$ can be chosen freely within the constraints mentioned above, provided that $\doslen > 0$ and that there exist positive constants $\dosboundrate$, $\dosboundstart$ 
such that for all $t \geq 0$,
\begin{equation} \label{e:doslen}
\doslen \leq \dosboundstart + \frac{t}{\dosboundrate}.
\end{equation}
\end{assumption}

The rationale behind this inequality is that, if $\dosboundstart = 0$, the DoS failure is active at most a proportion of ${1}/{\dosboundrate}$ of the time (since $\dosboundrate > 1$).
Adding \dosboundstart is necessary, since if $\dosstart_0 = 0$, $|\dosallitv(\dosduration_0)| = \dosduration_0 \geq \dosduration_0 / \dosboundrate$, hence $\dosduration_0$ is required to be zero.
The addition of $\dosboundstart > 0$ therefore allows the failure to be active at the start of the interval under consideration.

No further conditions are placed on the structure of the DoS state, allowing it to occur aperiodically, allowing subsequent events to differ in length, and allowing any or no specific stochastic distribution \citep{persis14resilient-ifac,persis15input}.

\subsection{Exponential convergence under DoS}

To prove the main result of this section, we first state the existence of an exponential growth during DoS intervals.

\begin{proposition} \label{prp:lyapbound-unstable}
Let Assumption~\ref{asm:existence} hold.
There exist sufficiently small $\ep{1}, \ep{2}$ and a positive constant $d$ such that for any \aedge for which $\incidence\T \aedge\in\rotorset$, and any $\freq\gen, \ctrl$, the directional derivative of $\lyap(\aedge, \freq, \ctrl)$ along the vector field  \eqref{e:swing}, \eqref{e:controller} \emph{with} $\mathcal{L}_\xi=\mathbb{0}_{n\times n}$ satisfies:
\begin{equation*}
\dot\lyap(\aedge, \freq, \ctrl) \leq d\, \lyap(\aedge, \freq, \ctrl).
\end{equation*}
\end{proposition}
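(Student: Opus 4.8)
The plan is to mimic the structure of the proof of Proposition~\ref{prp:lyapbound} (via Lemma~\ref{lem:lyapderiv} and Lemma~\ref{lem:lyapmatpos}), with the only change being that the term $\laplcomm$ disappears from the controller dynamics. First I would recompute the directional derivative of $\lyap$ along the DoS vector field, i.e.\ \eqref{e:transincrementalswing} with $\laplcomm = \0_{\cardin\times\cardin}$. Inspecting the proof of Lemma~\ref{lem:lyapderiv}, the only place where $\laplcomm$ enters is through the term $-(\diff\ctrld)\T \filter\T \balance^{\frac12}\laplcomm\balance^{\frac12}\filter(\diff\ctrld)$ in $\dot\lyap$, and through the corresponding block $\filter\T\balance^{\frac12}\laplcomm\balance^{\frac12}\filter$ in $\lyapmat(\aedge)$. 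Setting $\laplcomm = \0$ therefore gives $\dot\lyap = -\lyapvec\T \lyapmat^{\rm DoS}(\aedge)\lyapvec$, where $\lyapmat^{\rm DoS}(\aedge)$ is exactly $\lyapmat(\aedge)$ from \eqref{e:lyapmat} but with the $(3,3)$ block replaced by $\0$.

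Next I would bound $\lyapmat^{\rm DoS}(\aedge)$ from below, again following the block-diagonalization argument of Lemma~\ref{lem:lyapmatpos}. The same two applications of Lemma~\ref{lem:removecrossterms} go through verbatim, producing a block-diagonal $\lyapmat'^{,\rm DoS}$ identical to $\lyapmat'$ except that the third block becomes $-\ep{1} I_{\cardin-1} - \ep{2}\frac{3\cardin}{\ctrlm}\filter\T\balance\I\filter$, which is now \emph{negative} definite rather than positive. All the other blocks are unchanged and strictly positive definite for $\ep{1},\ep{2}$ small. Consequently, $\lyapmat^{\rm DoS}(\aedge) \geq \lyapmat'^{,\rm DoS}$, and since $\incidence\T\aedge \in \rotorset$ is a compact set over which $\lyapmat^{\rm DoS}(\aedge)$ depends continuously on $\aedge$ (through $\nabla^2\pot(\aedge)$), the smallest eigenvalue of $\lyapmat^{\rm DoS}(\aedge)$ is bounded below by some $-d' \in \R$ uniformly in $\aedge$; that is, $\lyapmat^{\rm DoS}(\aedge) \geq -d' I$, so $\dot\lyap \leq d' \|\lyapvec\|^2$.

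Finally I would convert the bound on $\|\lyapvec\|^2$ into a bound on $\lyap$. Using $\|\potdiff\|^2 \leq \al{2}\|\diff\aedge\|^2$ (the second bound of Lemma~\ref{lem:activepower}, the companion of the one used in Proposition~\ref{prp:lyapbound}), we get $\|\lyapvec\|^2 \leq \max(\al{2},1)\|\statevec\|^2$, and hence $\|\lyapvec\|^2 \leq \max(\al{2},1)\|\statevec\gen\|^2 \cdot \gamma$ after invoking Claim~\ref{enumitem:lem-activepower-bound-genonly} of Lemma~\ref{lem:activepower} to control $\freq\load$ by $\statevec\gen$; then Lemma~\ref{lem:lyapbound} gives $\|\statevec\gen\|^2 \leq \frac{1}{\lowerbd}\lyap$. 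Chaining these yields $\dot\lyap \leq d\,\lyap$ with $d := d'\gamma\max(\al{2},1)/\lowerbd$, as claimed. The only mildly delicate point — and the one I would be most careful about — is making sure the $\statevec \to \statevec\gen$ reduction works in the \emph{upper}-bound direction (it does, via the same load-damping estimate used at the end of Theorem~\ref{thm:expstab}, since $\damp\load > 0$), so that the right-hand side can legitimately be re-expressed in terms of $\lyap$ rather than $\|\statevec\gen\|^2$; everything else is a sign-flipped copy of the earlier arguments.
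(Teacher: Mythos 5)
Your proposal is correct and follows essentially the same route as the paper: compute $\dot\lyap=-\lyapvec\T\lyapmat(\aedge)|_{\laplcomm=\0}\lyapvec$, bound the resulting (no longer positive definite) matrix from below uniformly over the compact set $\{\aedge:\incidence\T\aedge\in\rotorset\}$ to get $\dot\lyap\le d'\|\lyapvec\|^2$, and then chain Claim~\ref{enumitem:lem-activepower-bound-potdiff}, Claim~\ref{enumitem:lem-activepower-bound-genonly} of Lemma~\ref{lem:activepower} and Lemma~\ref{lem:lyapbound} to replace $\|\lyapvec\|^2$ by $\lyap/\lowerbd$ up to the constant $\gamma\max(1,\al{2})$. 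The paper skips your explicit re-run of the block-diagonalization and simply takes $-\min_{\incidence\T\aedge\in\rotorset}\lmin{\lyapmat(\aedge)|_{\laplcomm=\0}}$ directly, but the argument is the same.
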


\begin{proof}
By a minor variation of Lemma~\ref{lem:lyapderiv} and Lemma~\ref{lem:lyapmatpos}, one writes
\[ \dot\lyap(\aedge, \freq, \ctrl) \leq \bound_{\rm DoS} \|\lyapvec\|^2 \]
for 
\[ \bound_{\rm DoS} := -\min_{\incidence\T \aedge \in \rotorset} \lmin{\lyapmat(\aedge)|_{\laplcomm=\0}} \]
positive for positive values of \ep{1} and \ep{2}.
From Lemma~\ref{lem:activepower}, one obtains a positive scalar $\al{2}$ such that $\dot\lyap(\aedge, \freq, \ctrl) \leq \bound_{\rm DoS} \max(1, \al{2}) \|\statevec\|^2$.
To proceed, we apply Claim \ref{enumitem:lem-activepower-bound-genonly} of Lemma~\ref{lem:activepower} to see that $\|\statevec\|^2 \leq \gamma \|\statevec\gen\|$ for a positive scalar $\gamma$.
Finally, we apply Lemma~\ref{lem:lyapbound} to end up at the claim of the Theorem:
\begin{align*}
\dot\lyap(\aedge, \freq, \ctrl) &\leq \bound_{\rm DoS} \max(1, \al{2}) \gamma \|\statevec\gen\|^2 \\
  &\leq \frac{\bound_{\rm DoS}}{\lowerbd} \max(1, \al{2}) \gamma \lyap(\aedge, \freq, \ctrl). \QEDhere
\end{align*}
\end{proof}

We are now ready to state the main result of this section. It applies to the solutions of system \eqref{e:swing}
controlled by 
\begin{align} \label{e:controller-switched}
\dot \ctrl &= - \laplcomm(t) \balance \ctrl - \balance\I \freq &  \inp &= \ctrl,
\end{align}
where 
\[
\laplcomm(t) = \left\{
\begin{array}{ll}
\laplcomm & t \not\in  \Xi(t)\\
\mathbb{0}_{n\times n} & t\in \Xi(t).
\end{array}
\right.
\]

\begin{theorem} \label{thm:expstabdos}
Let Assumption \ref{asm:existence} hold, and let $\bound, d$ be as in Propositions~\ref{prp:lyapbound} and~\ref{prp:lyapbound-unstable}, respectively. 
Suppose that the communication between the controllers is subject to a DoS event, for which Assumption~\ref{asm:doslimits} holds with
\begin{equation}\label{e:tau-DoS}
\tau > 1 + \frac{d}{c}.
\end{equation}
Then, there exists a neighborhood of the equilibrium $(\bar\aedge, \0, \bar\ctrl)$ such that solutions of the closed-loop system \eqref{e:swing}, \eqref{e:controller-switched}, that start from this neighborhood exponentially converge to the equilibrium, namely, for all $t \geq 0$ we have
\begin{equation} \label{e:dos-exp-decline}
  \|\statevec(t)\| \leq \alpha \exp{-\beta t} \|\statevec(0)\|,
\end{equation}
with $\beta = \frac{1}{2}(c- \frac{c+d}{\tau}) > 0$, $\alpha = \sqrt{\gamma e^{\dosboundstart (\bound + d)} \upperbd / \lowerbd}$, and $\gamma$ as in  Lemma~\ref{lem:activepower}.
\end{theorem}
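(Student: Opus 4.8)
The plan is to view the closed loop \eqref{e:swing}, \eqref{e:controller-switched} as a switched system and concatenate the two differential inequalities already in hand: on the nominal intervals $[0,t]\setminus\dosallitv(t)$ Proposition~\ref{prp:lyapbound} gives $\dot\lyap \leq -\bound\,\lyap$, while on the DoS intervals $\dosallitv(t)$ Proposition~\ref{prp:lyapbound-unstable} gives $\dot\lyap \leq d\,\lyap$, both valid as long as $\incidence\T\aedge\in\rotorset$. Since the solution is continuous and $t\mapsto \lyap(\aedge(t),\freq(t),\ctrl(t))$ is then locally absolutely continuous, a comparison (Gr\"onwall) argument on the piecewise-constant rate $\mu(s) = -\bound$ on the nominal part and $\mu(s)=d$ on the DoS part yields, for all $t$ in the time window where the estimates apply,
\[ \lyap(t) \leq \lyap(0)\,\exp{-\bound(t - \doslen) + d\,\doslen} = \lyap(0)\,\exp{-\bound t + (\bound+d)\doslen}. \]
Invoking Assumption~\ref{asm:doslimits}, $\doslen \leq \dosboundstart + t/\dosboundrate$, so $\lyap(t) \leq \lyap(0)\,\exp{\dosboundstart(\bound+d)}\,\exp{-(\bound - (\bound+d)/\dosboundrate)\,t}$; and the hypothesis \eqref{e:tau-DoS}, $\dosboundrate > 1 + d/\bound$, is precisely what makes $2\beta := \bound - (\bound+d)/\dosboundrate > 0$.

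Before this, I would dispose of the domain issue, which is the one genuinely delicate point. Unlike in Theorem~\ref{thm:expstab}, sublevel sets of $\lyap$ are \emph{not} invariant under the DoS mode, so I cannot simply fix a compact invariant level set $\levelset$. Instead I would observe from the bound above that $-\bound t + (\bound+d)\doslen \leq \dosboundstart(\bound+d) - 2\beta t \leq \dosboundstart(\bound+d)$, i.e. the worst-case overshoot factor $\exp{\dosboundstart(\bound+d)}$ is finite and independent of $t$. Since the equilibrium lies in the interior of $\{\aedge : \incidence\T\aedge\in\rotorset\}$ by Assumption~\ref{asm:existence} and the ensuing remark, and $\lyap$ has a strict minimum there by Lemma~\ref{lem:lyapbound}, there is a sublevel set $\levelset_0 = \{\lyap \leq r\}$ with $r$ small enough that the inflated set $\{\lyap \leq r\,\exp{\dosboundstart(\bound+d)}\}$ is still contained in that region. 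Taking the initial state in $\levelset_0$, a standard maximality argument closes the loop: on the maximal interval $[0,T^{*})$ on which $\incidence\T\aedge(s)\in\rotorset$ both Propositions apply, hence $\lyap(t)\leq r\,\exp{\dosboundstart(\bound+d)}$ on $[0,T^{*})$, which keeps $\incidence\T\aedge(t)$ bounded away from $\partial\rotorset$; by continuity $T^{*}=\infty$, and the estimate of the previous paragraph holds for all $t\geq 0$.

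It then remains to translate the exponential decay of $\lyap$ into one for $\|\statevec(t)\|$. By Lemma~\ref{lem:lyapbound}, $\lowerbd\|\statevec\gen(t)\|^2 \leq \lyap(t)$ and $\lyap(0) \leq \upperbd\|\statevec\gen(0)\|^2 \leq \upperbd\|\statevec(0)\|^2$, while Claim~\ref{enumitem:lem-activepower-bound-genonly} of Lemma~\ref{lem:activepower} gives $\|\statevec(t)\|^2 \leq \gamma\|\statevec\gen(t)\|^2$ (here $\damp\load > 0$ is what lets one recover $\freq\load$). Chaining these with the bound on $\lyap$ gives $\|\statevec(t)\|^2 \leq (\gamma\upperbd/\lowerbd)\,\exp{\dosboundstart(\bound+d)}\,\|\statevec(0)\|^2\,\exp{-2\beta t}$, and taking square roots yields \eqref{e:dos-exp-decline} with $\beta = \tfrac12(\bound - (\bound+d)/\dosboundrate)$ and $\alpha = \sqrt{\gamma\,\exp{\dosboundstart(\bound+d)}\,\upperbd/\lowerbd}$, as claimed.

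The main obstacle is the invariance step under the unstable DoS mode: one must argue carefully that the $t$-independent overshoot $\exp{\dosboundstart(\bound+d)}$ permits choosing the initial sublevel set small enough that the trajectory never leaves $\{\incidence\T\aedge\in\rotorset\}$, so that Propositions~\ref{prp:lyapbound} and~\ref{prp:lyapbound-unstable} are legitimately applicable for all time. Everything else is comparison-lemma bookkeeping together with the norm equivalences already established in Lemmas~\ref{lem:lyapbound} and~\ref{lem:activepower}.
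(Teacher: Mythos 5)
Your proposal is correct and its computational core---the piecewise Gr\"onwall bound $\lyap(t)\leq\lyap(0)\exp{-\bound t+(\bound+d)\doslen}$, the use of Assumption~\ref{asm:doslimits} together with \eqref{e:tau-DoS} to make $2\beta=\bound-(\bound+d)/\dosboundrate>0$, and the translation to $\|\statevec(t)\|$ via Lemma~\ref{lem:lyapbound} and Claim~\ref{enumitem:lem-activepower-bound-genonly} of Lemma~\ref{lem:activepower}---coincides exactly with the paper's argument, including the final constants $\alpha$ and $\beta$. The one place where you diverge is the step you rightly flag as delicate: keeping the trajectory inside $\{\incidence\T\aedge\in\rotorset\}$ so that Propositions~\ref{prp:lyapbound} and~\ref{prp:lyapbound-unstable} remain applicable. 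The paper disposes of this by observing that the energy function \eqref{e:lyap} with $\ep{1}=\ep{2}=0$ is a \emph{common weak} Lyapunov function for both modes (its derivative is $-\freq\T\damp\freq\le 0$ even when $\laplcomm=\0$), so the switched system is Lyapunov stable and a sufficiently small neighborhood of the equilibrium yields solutions confined to $\rotorset$. You instead exploit the uniform-in-$t$ overshoot factor $\exp{\dosboundstart(\bound+d)}$ of the \emph{strict} function \lyap, shrink the initial sublevel set so that its inflation by that factor still sits inside the admissible region, and close with a continuation argument on the maximal interval. Both are sound; the paper's route is shorter because it borrows stability from the cross-term-free storage function, while yours is more self-contained (it needs no auxiliary weak Lyapunov function) and makes the dependence of the admissible initial set on $\dosboundstart$, $\bound$ and $d$ explicit, which is arguably more informative. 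One small caveat in your version: the lower bound of Lemma~\ref{lem:lyapbound} is only stated on $\{\incidence\T\aedge\in\rotorset\}$, so the inclusion $\{\lyap\le r\,\exp{\dosboundstart(\bound+d)}\}\subset\{\incidence\T\aedge\in\rotorset\}$ should be read as holding for the connected component containing the equilibrium, which your maximality argument implicitly does.
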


\begin{proof}
%\nima{First,  following \cite{cdp-nm-js-fd-cdc16, trip16internal}, we note that the function $W$ in \eqref{} with $\epsilon_1=\epsilon_2=0$, provides a common weak Lyapunov function for the switched system \ref{e:controller-switched}. 
First, we note that the equilibrium $(\bar\aedge, \0, \bar\ctrl)$ of system \eqref{e:swing}, \eqref{e:controller-switched}, is Lyapunov stable \citep[e.g.][]{depersis16lyapunov,trip16internal}. In fact, the function $W$ in \eqref{e:lyap} with $\ep{1} = \ep{2} = 0$, provides a common weak Lyapunov function for the switched system \eqref{e:swing}, \eqref{e:controller-switched}.
Hence, there exists a neighborhood of the equilibrium point for which any solution that originates in it remains in the set of points such that $\incidence\T \aedge\in \rotorset$.
Then, for all $t \geq 0$,  
\begin{multline} \label{e:exp-bound-dos}
\lyap(\aedge(t), \freq(t), \ctrl(t)) \\
	\leq \lyap(\aedge(0), \freq(0), \ctrl(0))
	     \exp{(c+d)\kappa} \exp{-t(c -\frac{c+d}{\tau})},
\end{multline}
where, to derive the inequality, we have distinguished in $[0, t]$ between intervals during which \lyap exponentially decays with rate $c$ (DoS-free intervals) and intervals during which \lyap exponentially increases with rate $d$ (DoS intervals), and used Propositions~\ref{prp:lyapbound} and~\ref{prp:lyapbound-unstable}. Therefore, using Lemma \ref{lem:lyapbound},
\[
\|\statevec\gen(t)\| \leq \sqrt{\frac{\upperbd}{\lowerbd}} \exp{(c+d)\frac{\kappa}{2}}
\exp{-\frac{t}{2}(c -\frac{c+d}{\tau})} \|\statevec\gen(0)\|.
\]
This results in exponential convergence of $\statevec\gen(t)$ with $\tilde\alpha := \sqrt{\upperbd/\lowerbd} \, \exp{(c+d)\frac{\kappa}{2}}$ and $\beta=\frac{1}{2}(c -\frac{c+d}{\tau})$.
Note that $\beta>0$ by \eqref{e:tau-DoS}.

Finally, setting $\alpha := \sqrt\gamma \tilde\alpha$ using Claim~\ref{enumitem:lem-activepower-bound-genonly} of Lemma~\ref{lem:activepower}, we conclude that the full state $(\diff\aedge, \freq, \diff\ctrl)$ exponentially converges to the origin as claimed.
\end{proof}

The result of theorem above indicates that optimal resource allocation and exponential convergence are preserved if the proportion of time for which the DoS is active is sufficiently small, see \eqref{e:tau-DoS}.
Moreover, the obtained exponential convergence directly relates bounds on the behavior of the closed loop power network, specifically the overshoot $\alpha$ and convergence rate $\beta$, to a combination of the physical and cyber parameters of the system and the ongoing DoS event.
This quantifies the performance degradation of the system as a result of the disruption.

%%%%%%%%%%%%%%%%%%%%%%%%%%%%%%%%%%%%%%%%%%%%%%%%%%%%%%%%%%%%%%%%%%%%%%%%%%%%%%%%
\section{Simulations} \label{sec:simulations} %%%%%%%%%%%%%%%%%%%%%%%%%%%%%%%%%%

\begin{figure}[t]\centering
\begin{tikzpicture}
	% Place the area circles
	\node [draw, circle] (area1) at (0, 0) {Node 1};
	\node [draw, circle] (area2) at (2, 0) {Node 2};
	\node [draw, circle] (area3) at (2,-2) {Node 3};
	\node [draw, circle] (area4) at (0,-2) {Node 4};
	% Connect nodes with black lines (transmission grid)
	\draw (area1) -- (area2);
	\draw (area2) -- (area3);
	\draw (area2) -- (area4);
	% Connect nodes with blue lines (comms grid)
	\draw [rounded corners=2ex, blue, dashed] (area1) -- ( .5,-1  ) -- (area4);
	\draw [rounded corners=2ex, blue, dashed] (area2) -- (1.5,-1  ) -- (area3);
	\draw [rounded corners=2ex, blue, dashed] (area3) -- (1,  -1.5) -- (area4);
	\draw [blue, dashed] (area1) -- (area3);
	% Add annotations for the \incidence_{ij} near black lines
	\node at ( 1, 1) {$\suscept_{12} = 25.6$};
	\node at (-1,-1) {$\suscept_{24} = 21.0$};
	\node at ( 3,-1) {$\suscept_{23} = 33.1$};
\end{tikzpicture}
\caption{The node network used for the simulations.
Solid lines denote the transmission lines, while dashed blue lines represent the communication graph used by the controllers.}
\label{f:examplenet}
\end{figure}
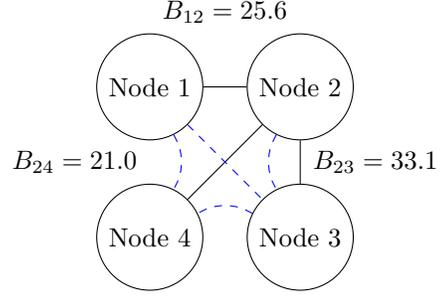

\begin{table}[t]\centering\vspace{1ex}
\captionsetup{width=\linewidth} % ifac breaks caption width somehow
\caption{Numerical values for the simulation}
\begin{subtable}{\linewidth}
\caption{Parameter values for the simulations.
         All parameters are provided in `per unit'.}
\label{t:exampleparams}
\centering
\begin{tabular}{lrrrrr}
\toprule
Node $i$              & Node 1   & Node 2   & Node 3   & Node 4 \\
\midrule
$\inert_{i}$          & $  3.26$ & $  3.26$ &   (load) &   (load) \\
$\damp_{i}$           & \multicolumn{4}{c}{All equal to 1}        \\
$\voltage_{i}$        & $  0.98$ & $  0.97$ & $  0.96$ & $  1.04$ \\
$\suscept_{ii}$       & $-46.60$ & $-79.70$ & $-33.10$ & $-21.00$ \\
$\balance_{i}$        & $  1.00$ & $  0.75$ & $  1.50$ & $  0.50$ \\
$\demand_{i}$         & $  0   $ & $  0   $ & $  0.72$ & $  0.24$ \\
\bottomrule
\end{tabular} \vspace{1em}

\end{subtable}
\begin{subtable}{\linewidth}
\caption{Convergence values resulting from the parameters of the case study.}
\label{t:simvals}
\centering
\begin{tabular}{r@{${}={}$}lr@{${}={}$}l}
\toprule
$\ep{1}$       & \num{0.025}   & $\ep{2}$       & \num{0.030} \\
$\lowerbd$     & \num{0.010}   & $\upperbd$     & \num{6.073} \\
$\derivbd$     & \num{0.012}   & $\bound$       & \num{4.120e-4} \\
$\alpha$       & \num{173.5}   & $\beta$        & \num{1.291e-4} \\
$\kappa$       & \num{10}      & $\tau$         & \num{1.5} \\
% Values found by running determine_epsilon, selecting epsilons using the plot
% so none of the constraints are violated (and otherwise as large as possible)
% and filling those in the determine_epsilon(_fake) files, then running runall
% to fill the other values and generate the plots.
\bottomrule
\end{tabular}
\end{subtable}
\end{table}

To illustrate the effect of interrupted communication, we simulate the action of the controllers, along with the values of \lyap, on an academic example of an electricity grid, taken from \citet{trip16internal}.
The network contains four nodes, connected by the graph depicted in Figure \ref{f:examplenet}.
Two nodes are generators, two nodes are loads.
The parameter values are listed in Table \ref{t:exampleparams}.

The network was first initialized to a steady state with load profile $\demand = \0$.
At $t = 0$, the profile was changed to the values in Table \ref{t:exampleparams}, and the system was subjected to a DoS sequence.
This initialization ensures that controller communication is essential for the system to reach the optimal values for $\ctrl$ and $\aedge$ given by \eqref{e:feas}.
The sequence starts with approximately \SI{30}{s} of DoS, and then short intervals of communication as dictated by \eqref{e:doslen}.

In Figure \ref{f:plot}, the evolution of $\|\statevec(\aedge, \freq, \ctrl)\|$ during the simulation is shown.
It is upper bounded as in \eqref{e:dos-exp-decline} in Theorem~\ref{thm:expstabdos}; we illustrate the slope of the bound using the red curve in the Figure.
The bound itself is less tight due to the large value of $\alpha$ from \eqref{e:dos-exp-decline}.
The numerical values of the parameters relating to convergence are displayed in Table \ref{t:simvals}.

%%%%%%%%%%%%%%%%%%%%%%%%%%%%%%%%%%%%%%%%%%%%%%%%%%%%%%%%%%%%%%%%%%%%%%%%%%%%%%%%
\section{Conclusions} \label{sec:conclusions} %%%%%%%%%%%%%%%%%%%%%%%%%%%%%%%%%%

We have introduced a Lyapunov function to show exponential convergence of power networks under the distributed averaging integral controllers from e.g., \citet{dorfler14breaking, trip16internal, monshizadeh16agreeing}, and, as an academic application, studied their performance when their communication network is intermittently interrupted.
We have derived a bound on the decay rate of the solutions in terms of properties of the interruption sequence.

Disruptions of other natures can be considered; sophisticated adversaries may opt to delay the communication signal or even inject false measurements.
Future work will quantify robustness to such measurement errors. We believe that the Lyapunov function introduced in the paper is very useful to study robustness to sensor noises \citep{andreasson14distributed}.
Also, this work considers only the case where communications are entirely removed; it is very interesting to consider disruptions of a subset of the communication links as in e.g.\ \citet{senejohnny16jamming}.

In addition to power networks, distributed averaging controllers arise in several other domains, such as distributed optimization. In that context, an exponential Lyapunov function could be useful to characterize the convergence speed as an alternative to heavy ball methods \citep{polyak16optimization}.

Finally, it would be interesting to investigate possible connections of the results
in this paper with the quadratic Lyapunov functions and resilience certificates of \citet{turitsyn17framework}.

\begin{figure}[t]\centering
	\includegraphics[width=\columnwidth]{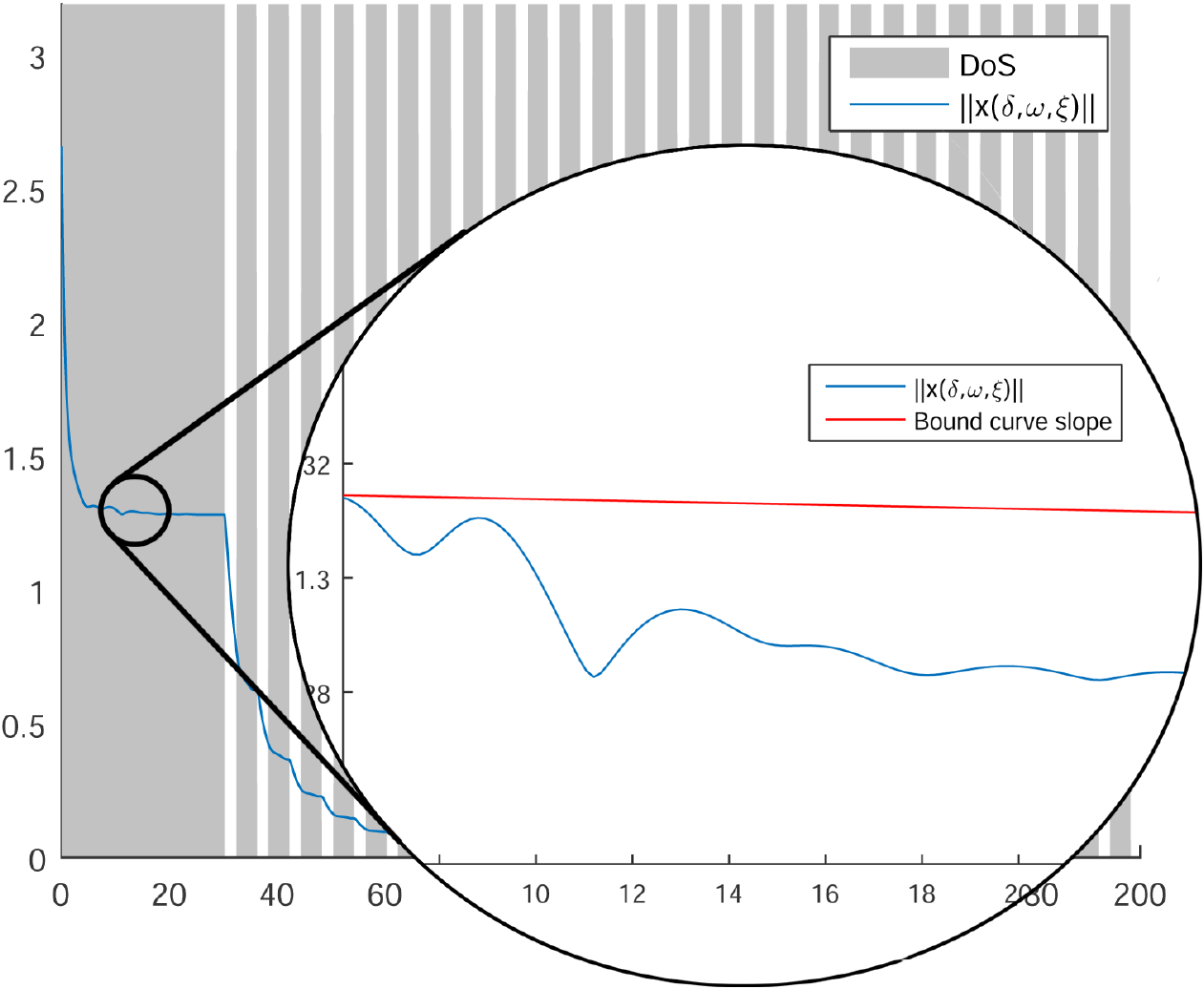}
	\caption{Evolution of $\|\statevec(\aedge, \freq, \ctrl)\|$ during the simulation. 
	         Shaded vertical bars represent the times during which controller communication was unavailable.
	         The detailed view illustrates the tightness of the decay rate $\beta$ obtained in Theorem~\ref{thm:expstabdos}.}
	\label{f:plot}
\end{figure}

\appendix
%%%%%%%%%%%%%%%%%%%%%%%%%%%%%%%%%%%%%%%%%%%%%%%%%%%%%%%%%%%%%%%%%%%%%%%%%%%%%%%
\section{Proofs and technical lemmas} \label{sec:techlemmas} %%%%%%%%%%%%%%%%%%

\begin{proof*}{Proof of Lemma~\ref{lem:lyapbound}}
Note that at the equilibrium $(\aedge, \freq\gen, \ctrl) = (\bar\aedge, \0, \bar\ctrl)$, $\lyap(\aedge, \freq, \ctrl)$ and $ \statevec\gen$ are both zero, and the inequalities in the lemma trivially hold.

To show the existence of the lower and upper bounds, we will first investigate the terms of $\lyap(\aedge, \freq, \ctrl)$ in \eqref{e:lyap1}.
This will lead to initial estimates for the bounds of the entirety of $\lyap(\aedge, \freq, \ctrl)$.
Then, by an appropriate choice of the $\ep{i}$ occurring in \eqref{e:lyap2}, we will limit and quantify the deviation from these estimates caused by the cross-terms.

Consider the terms in \eqref{e:lyap1}.
Since $\inert\gen$ and $\balance$ are diagonal matrices with positive elements, outside equilibria we have
\[ 0 < \lmin{\inert\gen}\|\freq\gen\|^2 \leq \|\freq\gen\|^2_{\inert\gen} \leq \lmax{\inert\gen}\|\freq\gen\|^2, \]
\[ 0 < \lmin{\balance}\|\diff\ctrl\|^2 \leq \|\diff\ctrl\|^2_{\balance} \leq \lmax{\balance}\|\diff\ctrl\|^2. \]
Furthermore, by Lemma~\ref{lem:activepower} in the Appendix,
\begin{multline*}
0 < \al{1}\|\diff\aedge\|^2 \leq \pot(\aedge) - \pot(\bar \aedge) - \nabla \pot(\bar \aedge)\T (\diff\aedge) \\\leq \al{2}\|\diff\aedge\|^2\!\!.
\end{multline*}
Therefore, if the cross-terms were absent, one would find 
\begin{align*}
\lowerbd &= \min \left( \tfrac12 \lmin{\inert\gen}, \tfrac12 \lmin\balance, \al{1} \right), \\
\upperbd &= \max \left( \tfrac12 \lmax{\inert\gen}, \tfrac12 \lmax\balance, \al{2} \right).
\end{align*}

Next, let us estimate the deviation caused by the cross-terms \eqref{e:lyap2}, for which we will use the following consequence of Young's inequality and the triangle inequality: for two vectors $a$, $b$,
\begin{multline*}
2|a\T b| = 2\left| \sum_i a_i b_i \right| \leq 2 \sum_i |a_i b_i| \\
   \leq \sum_i (a_i^2 + b_i^2) = \|a\|^2 + \|b\|^2.
\end{multline*}
Similarly,
\[ 2|a\T b| \geq -\|a\|^2 - \|b\|^2. \]

Consider $\lyap_1 := (\potdiff)\T \balance \inert \freq$.
Using the above,
\begin{multline*}
  -\|\balance(\potdiff)\|^2 - \|\inert \freq\|^2 \leq 2 \lyap_1 \\
  \leq \|\balance(\potdiff)\|^2 + \|\inert \freq\|^2
\end{multline*}
Then, using Lemma~\ref{lem:activepower}, and noting that $\inert \freq = \inert\gen \freq\gen$ by definition of \inert, we find
\begin{multline*}
  -\al{2} \lmax{\balance} \|\diff\aedge\|^2 - \lmax{\inert\gen}^2 \|\freq\gen\|^2 \leq 2 \lyap_1 \\
  \leq \al{2} \lmax{\balance} \|\diff\aedge\|^2 + \lmax{\inert\gen}^2 \|\freq\gen\|^2.
\end{multline*}

Next, we consider $\lyap_2 := (\diff{\ctrl})\T \ones \inert \freq$.
Using the above,
\begin{multline*}
  -\|\ones (\diff\ctrl)\|^2 - \|\inert \freq\|^2 \leq 2 \lyap_2 \\
  \leq \|\ones (\diff\ctrl)\|^2 + \|\inert \freq\|^2,
\end{multline*}
and, since $\lmax{\ones} = \cardin,$
\begin{multline*}
  -\cardin^2 \|\diff\ctrl\|^2 - \lmax{\inert\gen}^2 \|\freq\gen \|^2 \leq 2 \lyap_2 \\
  \leq \cardin^2 \|\diff\ctrl\|^2 + \lmax{\inert\gen}^2 \|\freq\gen\|^2.
\end{multline*}

As a result, the entire Lyapunov function is bounded as in the Lemma, with
\begin{align*}
\lowerbd &= \tfrac12 \min \big( \lmin{\inert\gen} 
                          - (\ep{1} + \ep{2}) \lmax{\inert\gen}^2, \\
         &\qquad        \lmin\balance - \ep{2} \cardin^2,
                        2 \al{1} - \ep{1}\al{2}\lmax{\balance} \big), \\
\upperbd &= \tfrac12 \max \big( \lmax{\inert\gen}
                          + (\ep{1} + \ep{2}) \lmax{\inert\gen}^2, \\
         &\qquad         \lmax\balance + \ep{2} \cardin^2,
                        2 \al{2} + \ep{1}\al{2}\lmax{\balance} \big).
\end{align*}
Here, \upperbd is trivially positive, while \lowerbd can be made positive by choosing \ep{1} and \ep{2} sufficiently small.
\end{proof*}

\begin{lemma} \label{lem:activepower}
Consider \aedge and $\pot(\aedge)$ as defined in Section~\ref{sec:setting},
and the Bregman distance $\lyap_\aedge := \pot(\aedge) - \pot(\bar\aedge) - \nabla\pot(\bar\aedge)\T(\diff\aedge)$.
The following properties hold for all $\aedge, \bar\aedge$ that satisfy $\incidence\T \aedge, \incidence\T \bar\aedge \in \rotorset$:
\begin{enumerate}
	\item \label{enumitem:lem-activepower-bound-potdiff}
	There exist positive scalars \al{1} and \al{2} such that
	\[ \al{1} \|\diff\aedge\|^2 \leq \|\potdiff\| \leq \al{2} \|\diff\aedge\|^2. \]
	\item \label{enumitem:lem-activepower-bound-bregman}
	There exist positive scalars \be{1} and \be{2} such that
	\[ \be{1} \|\diff\aedge\|^2 \leq \lyap_\aedge \leq \be{2} \|\diff\aedge\|^2. \]
	\item \label{enumitem:lem-activepower-bound-genonly}
	There exists a positive scalar $\gamma$ such that
	\[ \|\statevec\gen\|^2 \leq \|\statevec\|^2 \leq \gamma \|\statevec\gen\|^2. \]
\end{enumerate}
\end{lemma}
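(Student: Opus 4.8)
The plan is to reduce all three bounds to a single structural estimate: on the set $\{\aedge : \incidence\T\aedge \in \rotorset\}$ the potential Hessian $\nabla^2\pot(\aedge) = \incidence\vedge\diag(\cos(\incidence\T\aedge))\incidence\T$ is uniformly elliptic once restricted to $\range(\average)$, the orthogonal complement of $\range(\1)$. Concretely, since $\rotorset = [\rho - \tfrac\pi2, \tfrac\pi2 - \rho]^\cardinedge$, every diagonal entry of $\diag(\cos(\incidence\T\aedge))$ lies in $[\sin\rho, 1]$, and \vedge is diagonal with strictly positive entries $\gamma_i$; hence $(\gamma_{\min}\sin\rho)\, v\T\incidence\incidence\T v \le v\T\nabla^2\pot(\aedge)v \le \gamma_{\max}\, v\T\incidence\incidence\T v$ for every $v$, where $\gamma_{\min} := \min_i\gamma_i > 0$ and $\gamma_{\max} := \max_i\gamma_i$. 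Connectivity of the physical graph makes $\incidence\incidence\T$ positive definite on $\range(\average)$, so there exist constants $0 < \hesslev \le \hessuev$, independent of \aedge, with $\hesslev\|v\|^2 \le v\T\nabla^2\pot(\aedge)v \le \hessuev\|v\|^2$ for all $v \in \range(\average)$. I would establish this preliminary estimate first; everything else is short.

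For the first bound I would apply the mean value theorem componentwise to $\sin(\incidence\T\aedge) - \sin(\incidence\T\bar\aedge)$ — legitimate because \rotorset is convex, so the intermediate arguments stay in \rotorset — to write $\potdiff = \incidence\vedge\Lambda'\incidence\T(\diff\aedge)$ with $\Lambda'$ diagonal and all diagonal entries in $[\sin\rho,1]$. The matrix $\incidence\vedge\Lambda'\incidence\T$ is symmetric with eigenvalues on $\range(\average)$ squeezed between $\hesslev$ and $\hessuev$ by the same computation, and $\diff\aedge \in \range(\average)$; this gives $\hesslev^2\|\diff\aedge\|^2 \le \|\potdiff\|^2 \le \hessuev^2\|\diff\aedge\|^2$. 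I would record the claim in this squared form, which is the one actually invoked later (Proposition~\ref{prp:lyapbound} and Section~\ref{sec:dos}). For the second bound I would use Taylor's theorem with integral remainder, $\lyap_\aedge = \int_0^1 (1-s)\,(\diff\aedge)\T\nabla^2\pot(\bar\aedge + s(\diff\aedge))(\diff\aedge)\intd s$, bound the integrand above and below by $\hessuev\|\diff\aedge\|^2$ and $\hesslev\|\diff\aedge\|^2$ (again using $\diff\aedge \in \range(\average)$ and $\incidence\T(\bar\aedge + s(\diff\aedge)) \in \rotorset$), and use $\int_0^1(1-s)\intd s = \tfrac12$, obtaining $\be{1} = \tfrac12\hesslev$ and $\be{2} = \tfrac12\hessuev$.

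For the third bound, observe that $\statevec = \col(\diff\aedge, \freq\gen, \freq\load, \diff\ctrl)$ whereas $\statevec\gen = \col(\diff\aedge, \freq\gen, \diff\ctrl)$, so $\|\statevec\|^2 = \|\statevec\gen\|^2 + \|\freq\load\|^2$; the left inequality is then immediate and it remains to bound $\|\freq\load\|$ in terms of $\|\statevec\gen\|$. The algebraic load-node equation in \eqref{e:incrementalswing}, $\0 = -\damp\load\freq\load - (\potdiff)\load + \diff{\ctrl\load}$, yields $\freq\load = \damp\load\I\big(\diff{\ctrl\load} - (\potdiff)\load\big)$, hence $\|\freq\load\| \le \|\damp\load\I\|\big(\|\diff\ctrl\| + \|\potdiff\|\big)$. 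Using the first bound as $\|\potdiff\| \le \hessuev\|\diff\aedge\|$ and then the trivial bounds $\|\diff\aedge\|, \|\diff\ctrl\| \le \|\statevec\gen\|$, I would obtain $\|\freq\load\|^2 \le \|\damp\load\I\|^2(1+\hessuev)^2\|\statevec\gen\|^2$, so the claim holds with $\gamma = 1 + \|\damp\load\I\|^2(1+\hessuev)^2$, which is finite because $\damp\load$ is positive definite.

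The main obstacle — essentially the only step with real content — is the uniform ellipticity estimate, since it uses two ingredients at once: the security constraint $\incidence\T\aedge \in \rotorset$, which keeps $\cos(\incidence\T\aedge) \ge \sin\rho > 0$ so the Hessian cannot degenerate as an angle approaches $\pm\pi/2$; and connectivity of the physical network, which makes $\incidence\incidence\T$ nonsingular on $\range(\average)$. One must also keep track of the fact that $\incidence\vedge\diag(\cos(\incidence\T\aedge))\incidence\T$ is genuinely singular on $\R^\cardin$ (its kernel is $\range(\1)$) and acts as a positive definite map only after restriction to $\range(\average)$ — which is precisely the subspace in which $\diff\aedge$ lives. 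All remaining steps are the mean value theorem, the Taylor remainder, and Cauchy--Schwarz.
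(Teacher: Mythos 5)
Your proof is correct and follows essentially the same route as the paper's: the componentwise mean value theorem for claim 1, a second-order Taylor/Hessian estimate on $\range(\average)$ for claim 2, and the algebraic load-node equation for claim 3, with uniform positivity of the weighted Laplacian over the compact set $\rotorset$ supplying the constants. The only differences are cosmetic --- you make the ellipticity constants explicit via $\cos(\cdot)\ge\sin\rho$ and connectivity rather than minimizing $\lambda_2$ over $\rotorset$, and you use the integral-form Taylor remainder instead of the Lagrange form --- and in doing so you also quietly repair two slips in the paper's own writeup (the mismatched powers of the norms in claim 1 and the use of $\lambda_{\max}(\damp\load)$ where $\lambda_{\min}(\damp\load)$ is needed in claim 3).
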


\begin{proof}
In the following, we denote by $L(\redge)$ the Laplacian matrix $\incidence\T \vedge [\cos(\eta)] \incidence$, for $\redge \in \R^\cardinedge$.

\emph{Proof of (\ref{enumitem:lem-activepower-bound-potdiff}).}
The vector \potdiff is defined as $\incidence \vedge (\sin(\incidence\T \aedge) - \sin(\incidence\T \bar\aedge))$.
Applying the mean value theorem componentwise to the difference vector $\sin(\incidence\T \aedge) - \sin(\incidence\T \bar\aedge)$ yields a vector $\tilde\aedge_i$ for each component as a function of \aedge and $\bar\aedge$, such that
\[ \sin(\incidence_i\T \aedge) - \sin(\incidence_i\T \bar\aedge) = \cos(\incidence_i\T \tilde\aedge_i) \incidence_i\T (\diff\aedge). \]
Stacking the result, and writing $\tilde\redge \in \R^\cardinedge$ such that $\tilde\redge_i := \incidence_i\T \tilde\aedge_i$, we arrive at 
\[ \sin(\incidence\T \aedge) - \sin(\incidence\T \bar\aedge) = [\cos \tilde\redge] \incidence\T (\diff\aedge). \]
Given that \aedge and $\bar\aedge$ satisfy the security constraint, each of the $\aedge_i \in \rotorset$, and therefore, $\tilde\redge \in \rotorset$.
By pre-multiplying by $\incidence\vedge$, we find $\potdiff = L(\tilde\redge) (\diff\aedge)$.
Given that $\tilde\redge \in \rotorset$, $L(\tilde\redge)$ is a Laplacian matrix, and therefore positive semi-definite with $\ker L(\tilde\redge) = \range\1$.
Since by definition, $\aedge, \bar\aedge \perp \1$,
\begin{multline*}
\lambda_2{L(\tilde\redge)}^2 \|\diff\aedge\|^2 \leq \|\potdiff\| \\\leq \lmax{L(\tilde\redge)}^2 \|\diff\aedge\|^2.
\end{multline*}
Remembering that $\tilde\redge$ depends on $\aedge$ and $\bar\aedge$, the result holds with
\[ \al{1} := \min_{\incidence\T \aedge, \incidence\T \bar\aedge \in \rotorset} \lambda_2(L(\tilde\redge))^2 \]
and
\[ \al{2} := \max_{\incidence\T \aedge, \incidence\T \bar\aedge \in \rotorset} \lmax{L(\tilde\redge)}^2. \]

\emph{Proof of (\ref{enumitem:lem-activepower-bound-bregman}).}
We claim that the Bregman distance $\pot(\aedge) - \pot(\bar\aedge) - \nabla\pot(\bar\aedge)\T (\diff\aedge)$ can be written as
\[ \lyap_\aedge = (\diff\aedge)\T L(\redge') (\diff\aedge) \]
for some $\redge' \in \rotorset$ that depends on \aedge and $\bar\aedge$.
To see this, we write $\lyap_\aedge$ as a function of $\redge := \incidence\T \aedge$, and likewise for $\bar\redge$. Then
\[ \lyap_\aedge = \tilde\pot(\redge) - \tilde\pot(\bar\redge) - \nabla\tilde\pot(\bar\redge)\T (\diff\redge), \]
setting $\tilde\pot(\redge) := -\1\T\vedge \cos\redge$, so $\nabla\tilde\pot(\redge) = \vedge\sin\redge$.
Since $\nabla^2\tilde\pot(\redge) = \vedge\cos\redge > 0$ for $\redge \in \rotorset$, and since \rotorset is closed, there exists a positive number \hesslev such that $\nabla^2 \tilde\pot(\redge)\ge \hesslev I$ for any $\redge \in \rotorset$.
This implies that $\tilde\pot(\redge)$ is a strongly convex function, and as a consequence, the Bregman distance is equal to 
\[ (\diff\redge)\T \nabla^2 \tilde\pot(\redge') (\diff\redge) \] 
for some $\tilde\redge$ whose elements are a convex combination of those of \redge and $\bar\redge$ \citep[Section 9.1.2, page 459]{boyd04convex}.
We then rewrite this in $\aedge$--coordinates to obtain the claim.

Since, once again, $\aedge \perp \1$, we have
\[ \lambda_2{L(\redge')} \|\diff\aedge\|^2 \leq \lyap_\aedge \leq \lmax{L(\redge')} \|\diff\aedge\|^2. \]
Therefore the result holds with
\[ \be{1} := \min_{\incidence\T \aedge, \incidence\T \bar\aedge \in \rotorset} \lambda_2(L(\redge')) \]
and
\[ \be{2} := \max_{\incidence\T \aedge, \incidence\T \bar\aedge \in \rotorset} \lmax{L(\redge')}. \]
\emph{Proof of (\ref{enumitem:lem-activepower-bound-genonly}).}
The first inequality follows immediately from the fact that $\statevec\gen$ is obtained by omitting the $\freq\load$ elements from \statevec.
The second one follows from the third equation in \eqref{e:incrementalswing}, which can be written as
\[ \damp\load \freq\load = - (\potdiff)\load + \diff{\ctrl\load}. \]
Hence, by Claim \ref{enumitem:lem-activepower-bound-potdiff} of this Lemma, we also have 
\begin{align*}
\|\damp\load \freq\load\| &= \|(\potdiff)\load - (\diff{\ctrl\load})\| \\
                       &\leq \|(\potdiff)\load\| + \|\diff{\ctrl\load}\| \\
                       &\leq \|(\potdiff)\| + \|\diff{\ctrl}\| \\
                       &\leq \al{2} \|\diff\aedge\| + \|\diff\ctrl\|.
\end{align*}
As a result, $\|\freq\load\| \leq \frac1{\lmax{\damp\load}} (\al{2} \|\diff\aedge\| + \|\diff\ctrl\|) \leq \frac1{\lmax{\damp\load}} (\al{2}+1) \|\statevec\gen\|$.
We conclude that
\begin{align*}
\|\statevec\|^2 &= \|\statevec\gen\|^2 + \|\freq\load\|^2 \\
                &\leq \left( 1 + \left(\frac{\al{2}+1}{\lmax{\damp\load}}\right)^2 \right) \|\statevec\gen\|^2,
\end{align*}
so $\gamma = 1 + \left( (\al{2}+1)/\lmax{\damp\load} \right)^2$.
\end{proof}

\begin{remark}
The bounds derived in the proof of this Lemma are general, but conservative. If the equilibrium value of $\bar\aedge$ is known, one can increase the potential tightness of $\al{i}, \be{i}$ by calculating the minima and maxima for this fixed value of $\bar\aedge$ and over \aedge with $\incidence\T \aedge \in \rotorset$.
\end{remark}

\begin{lemma} \label{lem:removecrossterms}
Given four appropriately sized matrices $a,$ $b,$ $c$ and $d$,
\[ M := \begin{bmatrix} a & b\T c \\ c\T b & d \end{bmatrix} \geq
        \begin{bmatrix} a - b\T b & 0 \\ 0 & d - c\T c \end{bmatrix} =: M'. \]
\end{lemma}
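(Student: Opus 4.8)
The plan is to reduce the claimed matrix inequality to the positive semidefiniteness of a single Gram matrix. Since for symmetric matrices $M \geq M'$ means exactly that $M - M' \geq 0$, the first step is to subtract the two block matrices blockwise. The off-diagonal blocks of $M'$ are zero and its diagonal blocks are $a - b\T b$ and $d - c\T c$, so one obtains at once
\[ M - M' = \begin{bmatrix} b\T b & b\T c \\ c\T b & c\T c \end{bmatrix}. \]

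The second and essentially final step is to recognise this as a Gram matrix. Setting $N := \begin{bmatrix} b & c \end{bmatrix}$ — a legitimate concatenation precisely because $b$ and $c$ have the same number of rows, which is the compatibility requirement already implicit in the block structure of $M$ — one has
\[ N\T N = \begin{bmatrix} b\T \\ c\T \end{bmatrix} \begin{bmatrix} b & c \end{bmatrix} = \begin{bmatrix} b\T b & b\T c \\ c\T b & c\T c \end{bmatrix} = M - M'. \]
Because $v\T N\T N v = \| N v \|^2 \geq 0$ for every conformable vector $v$, the matrix $M - M'$ is positive semidefinite, and therefore $M \geq M'$, as claimed.

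I do not expect any genuine obstacle here: the lemma is simply a convenient packaging of the identity above, whose content is that an off-diagonal cross term $b\T c$ can be absorbed at the price of subtracting the ``squares'' $b\T b$ and $c\T c$ from the respective diagonal blocks. The only point requiring care is dimension bookkeeping — $a$ must be square with size equal to the number of columns of $b$, $d$ square with size equal to the number of columns of $c$, and $b, c$ must share their row dimension — but all of this is forced by the conformability of the blocks appearing in $M$, so no extra hypotheses are needed. (This is exactly the mechanism used twice in the proof of Lemma~\ref{lem:lyapmatpos} to strip the cross terms out of $\lyapmat_{\ep{1}}$ and $\lyapmat_{\ep{2}}$.)
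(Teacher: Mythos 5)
Your proof is correct and is essentially the paper's own argument in slightly repackaged form: the paper expands $x\T(M-M')x$ for $x=\col(u,v)$ and bounds the cross term via $0\leq (bu+cv)\T(bu+cv)$, which is exactly your observation that $M-M'=N\T N$ with $N=\begin{bmatrix} b & c\end{bmatrix}$ is a Gram matrix. No gap; nothing further needed.
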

\begin{proof}
For any appropriately sized pair of vectors $u$ and $v$, let $x\T := \begin{bmatrix} u\T & v\T \end{bmatrix}$. Then
\[ x\T M x = u\T a u + v\T d v + 2 u\T b\T c v. \]
Note that
\begin{multline*}
0 \leq (b u + c v)\T (b u + c v) \\
  = u\T b\T b u + v\T c\T c v + 2 u\T b\T c v,
\end{multline*}
hence
\[ 2 u\T b\T c v \geq - u\T b\T b u - v\T c\T c v. \]
As a result,
\[ x\T M x \geq u\T (a - b\T b) u + v\T (d - c\T c) v, \]
so the claim follows.
\end{proof}

\futureconsideration{To shorten the paper and emphasize the theoretical aspects many of the references on power nets and DoS can be omitted.}

\bibliography{bibliography}{}
\bibliographystyle{./IEEEtranSN}

\end{document}